\documentclass[a4paper,USenglish,cleveref, autoref, thm-restate]{lipics-v2021}

\hideLIPIcs  


\bibliographystyle{plainurl}

\title{The Functor of Points Approach to Schemes in Cubical Agda} 


\author{Max Zeuner}{Department of Mathematics, Stockholm University, Sweden}{zeuner@math.su.se}{https://orcid.org/0000-0003-3092-8144}{}

\author{Matthias Hutzler}{Department of Computer Science and Engineering, Gothenburg University, Sweden}{matthias-hutzler@posteo.net}{}{}

\authorrunning{M. Zeuner and M. Hutzler} 

\Copyright{Max Zeuner and Matthias Hutzler} 

\ccsdesc[500]{Theory of computation~Logic and verification}
\ccsdesc[500]{Theory of computation~Constructive mathematics}
\ccsdesc[500]{Theory of computation~Type theory} 

\keywords{Schemes, Algebraic Geometry, Category Theory, Cubical Agda, Homotopy Type Theory and Univalent Foundations, Constructive Mathematics} 

\category{} 


\supplement{}
\supplementdetails[subcategory={Agda Source Code}]{Software}{https://github.com/agda/cubical/blob/master/Cubical/Papers/FunctorialQcQsSchemes.agda} 

\funding{This paper is based upon research supported by the Swedish
  Research Council (SRC, Vetenskapsrådet) under Grant
  No.~2019-04545.}

\acknowledgements{
  We would like to thank Felix Cherubini, Thierry Coquand and Anders Mörtberg
  for their support and feedback throughout this project.
}

\nolinenumbers 

\EventEditors{John Q. Open and Joan R. Access}
\EventNoEds{2}
\EventLongTitle{42nd Conference on Very Important Topics (CVIT 2016)}
\EventShortTitle{CVIT 2016}
\EventAcronym{CVIT}
\EventYear{2016}
\EventDate{December 24--27, 2016}
\EventLocation{Little Whinging, United Kingdom}
\EventLogo{}
\SeriesVolume{42}
\ArticleNo{23}

\usepackage{stmaryrd}
\usepackage{relsize}
\usepackage{nicefrac}
\usepackage{tikz-cd}
\usepackage{graphicx}
\usepackage{mathtools}
\usepackage{quiver}
\usepackage{xspace}

\newcommand{\teletype}[1]{\ensuremath{\mathtt{#1}}}
\newcommand{\systemname}[1]{\teletype{\color{darkgray}#1}\xspace}

\newcommand{\Agda}{\systemname{Agda}}

\newcommand{\agdaCubical}{\systemname{agda/cubical}}
\newcommand{\Coq}{\systemname{Coq}}

\newcommand{\CubicalAgda}{\systemname{Cubical} \systemname{Agda}}

\newcommand{\Lean}{\systemname{Lean}}
\newcommand{\mathlib}{\systemname{mathlib}}
\newcommand{\IsabelleHOL}{\systemname{Isabelle/HOL}}

\newcommand{\UniMath}{\systemname{UniMath}}
\definecolor{Revolutionary}{RGB}{232,70,68}


\definecolor{dkblue}{rgb}{0,0.1,0.5}
\definecolor{lightblue}{rgb}{0,0.5,0.5}
\definecolor{dkgreen}{rgb}{0,0.6,0}
\definecolor{dkbrown}{rgb}{0.4,0,0}
\definecolor{dkviolet}{rgb}{0.6,0,0.8}



\usepackage{agda}

\newcommand{\func}[1]{{\AgdaFunction{#1}}}

\usepackage{catchfilebetweentags}

\usepackage{newunicodechar}
\newunicodechar{≃}{\ensuremath{\simeq}}
\newunicodechar{≅}{\ensuremath{\cong}}
\newunicodechar{∈}{\ensuremath{\in}}
\newunicodechar{⁻}{\ensuremath{^{-}}}
\newunicodechar{₋}{\ensuremath{_{-}}}
\newunicodechar{ᴹ}{\ensuremath{_{M}}}
\newunicodechar{ᴺ}{\ensuremath{_{N}}}
\newunicodechar{≡}{\ensuremath{\equiv}}
\newunicodechar{≈}{\ensuremath{\approx}}
\newunicodechar{∼}{\ensuremath{\sim}}
\newunicodechar{λ}{\ensuremath{\lambda}}
\newunicodechar{⊎}{\ensuremath{\uplus}}
\newunicodechar{∷}{\ensuremath{::}}
\newunicodechar{ℓ}{\ensuremath{\ell}}
\newunicodechar{⃗}{\ensuremath{^{\rightarrow}}}
\newunicodechar{⃖}{\ensuremath{^{\leftarrow}}}
\newunicodechar{ᵢ}{\ensuremath{_i}}
\newunicodechar{⟨}{\ensuremath{\langle}}
\newunicodechar{⟩}{\ensuremath{\rangle}}
\newunicodechar{α}{\ensuremath{\alpha}}
\newunicodechar{β}{\ensuremath{\beta}}
\newunicodechar{θ}{\ensuremath{\theta}}
\newunicodechar{φ}{\ensuremath{\varphi}}
\newunicodechar{ψ}{\ensuremath{\psi}}
\newunicodechar{χ}{\ensuremath{\chi}}
\newunicodechar{η}{\ensuremath{\eta}}
\newunicodechar{ε}{\ensuremath{\varepsilon}}
\newunicodechar{ι}{\ensuremath{\iota}}
\newunicodechar{Σ}{\ensuremath{\Sigma}}
\newunicodechar{∀}{\ensuremath{\forall}}
\newunicodechar{∃}{\ensuremath{\exists}}
\newunicodechar{ℕ}{\ensuremath{\mathbb{N}}}
\newunicodechar{→}{\ensuremath{\to}}
\newunicodechar{⇒}{\ensuremath{\Rightarrow}}
\newunicodechar{⊎}{\ensuremath{\uplus}}
\newunicodechar{⋆}{\ensuremath{*}}
\newunicodechar{·}{\ensuremath{\cdot}}
\newunicodechar{∅}{\ensuremath{\emptyset}}
\newunicodechar{×}{\ensuremath{\times}}
\newunicodechar{¬}{\ensuremath{\lnot}}
\newunicodechar{Θ}{\ensuremath{\theta}}
\newunicodechar{₁}{\ensuremath{_1}}
\newunicodechar{₂}{\ensuremath{_2}}
\newunicodechar{¹}{\ensuremath{^1}}
\newunicodechar{ₗ}{\ensuremath{_l}}
\newunicodechar{ₚ}{\ensuremath{_p}}
\newunicodechar{ᵤ}{\ensuremath{_u}}
\newunicodechar{ˣ}{\ensuremath{^{\times}}}
\newunicodechar{∪}{\ensuremath{\cup}}
\newunicodechar{⋂}{\ensuremath{\cap}}
\newunicodechar{ᴸ}{\ensuremath{{}^{\color{black}\leftarrow}}}  
\newunicodechar{ᴿ}{\ensuremath{{}^{\color{black}\rightarrow}}} 
\newunicodechar{𝓞}{\ensuremath{\mathcal{O}}}
\newunicodechar{ᴮ}{\ensuremath{^B}}
\newunicodechar{ᴰ}{\ensuremath{^D}}
\newunicodechar{≼}{\ensuremath{\preceq}}
\newunicodechar{⊆}{\ensuremath{\subseteq}}
\newunicodechar{Π}{\ensuremath{\Pi}}
\newunicodechar{⦃}{\ensuremath{\lBrace}}
\newunicodechar{⦄}{\ensuremath{\rBrace}}
\newunicodechar{∣}{\ensuremath{\mid}}
\newunicodechar{ⁿ}{\ensuremath{^n}}
\newunicodechar{≥}{\ensuremath{\ge}}
\newunicodechar{ℙ}{\ensuremath{\mathbb{P}}}
\newunicodechar{𝓞}{\ensuremath{\mathcal{O}}}
\newunicodechar{ᴰ}{\ensuremath{^D}}
\newunicodechar{≼}{\ensuremath{\preceq}}
\newunicodechar{𝔞}{\ensuremath{\mathfrak{a}}}
\newunicodechar{𝔟}{\ensuremath{\mathfrak{b}}}
\newunicodechar{𝓕}{\ensuremath{\mathcal{F}}}
\newunicodechar{𝓟}{\ensuremath{\mathcal{P}}}
\newunicodechar{∙}{\ensuremath{\bullet}}
\newunicodechar{∎}{\ensuremath{\qed}}
\newunicodechar{√}{\ensuremath{\surd}}

\newunicodechar{∧}{\ensuremath{\wedge}}
\newunicodechar{∨}{\ensuremath{\vee}}
\newunicodechar{∼}{\ensuremath{\sim}}
\newunicodechar{≢}{\ensuremath{\nequiv}}
\newunicodechar{Π}{\ensuremath{\Pi}}
\newunicodechar{ℤ}{\ensuremath{\mathbb{Z}}}
\newunicodechar{∥}{\ensuremath{\parallel}}
\newunicodechar{ℚ}{\ensuremath{\mathbb{Q}}}
\newunicodechar{₊}{\ensuremath{_+}}
\newunicodechar{∗}{\ensuremath{\ast}}
\newunicodechar{₀}{\ensuremath{_0}}
\newunicodechar{₁}{\ensuremath{_1}}
\newunicodechar{¹}{\ensuremath{^1}}
\newunicodechar{₂}{\ensuremath{_2}}
\newunicodechar{⊥}{\ensuremath{\bot}}
\newunicodechar{⊤}{\ensuremath{\top}}
\newunicodechar{∘}{\ensuremath{\circ}}
\newunicodechar{ρ}{\ensuremath{\rho}}
\newunicodechar{𝔸}{\ensuremath{\mathbb{A}}}
\newunicodechar{⋁}{\ensuremath{\bigvee}}

\newcommand{\Type}{\func{Type}}
\newcommand{\hProp}{\func{hProp}}
\newcommand{\ptrunc}[1]{\func{∥}\,{#1}\,\func{∥}}

\newcommand{\tySigma}[3]{\func{Σ[}~{#1}~\func{∈}~{#2}~\func{]}~{(#3)}}
\newcommand{\tySigmaNoParen}[3]{\func{Σ[}~{#1}~\func{∈}~{#2}~\func{]}~{#3}}
\newcommand{\tyExists}[3]{\func{∃[}~{#1}~\func{∈}~{#2}~\func{]}~{(#3)}}
\newcommand{\tyExistsNoParen}[3]{\func{∃[}~{#1}~\func{∈}~{#2}~\func{]}~{#3}}

\newcommand{\tyPath}[2]{{#1}~\func{≡}~{#2}}

\newcommand{\NatTrans}[2]{{#1}~\func{⇒}~{#2}}
\newcommand{\coBrackets}[1]{\llbracket\,{#1}\,\rrbracket^{\mathsf{co}}}

\newcommand{\rest}[2]{#1\!\!\restriction_{#2}}
\newcommand{\locEl}[2]{#1[\nicefrac{1}{#2}]}
\newcommand{\ZarLat}[1]{\mathcal{L}_{#1}}

\newcommand{\Z}{\mathbb{Z}}
\newcommand{\ZL}{\ensuremath{\mathcal{L}_R}}

\newcommand{\Aone}{\func{𝔸¹}}
\newcommand{\Ofun}{\func{𝓞}}
\newcommand{\ZFunctor}{\func{ℤFunctor}}

\setlength{\mathindent}{15pt}

\DeclarePairedDelimiterX{\norm}[1]{\lVert}{\rVert}{#1}

\begin{document}

\maketitle

\begin{abstract}
  We present a formalization of quasi-compact and quasi-separated
  schemes (qcqs-schemes) in the \CubicalAgda~proof assistant. We follow
  Grothendieck's functor of points approach, which defines schemes,
  the quintessential notion of modern algebraic geometry,
  as certain well-behaved functors from commutative rings to sets.
  This approach is often regarded as conceptually simpler than the standard approach of
  defining schemes as locally ringed spaces,
  but to our knowledge it has not yet been adopted in
  formalizations of algebraic geometry.
  We build upon a previous formalization of the so-called
  Zariski lattice associated to a commutative ring in order to define the notion
  of compact open subfunctor. This allows for a concise
  definition of qcqs-schemes, streamlining the usual
  presentation as e.g.\ given in the standard textbook of Demazure and Gabriel.
  It also lets us obtain a fully constructive
  proof that compact open subfunctors of affine schemes are qcqs-schemes.
\end{abstract}

\section{Introduction}\label{sec: intro}
Algebraic geometry developed as the study of solutions
to systems of polynomial equations.
Objects of interest would e.g.\ be ``affine complex varieties'',
subsets of $\mathbb{C}^n$ that can be described as the common roots
of a finite system of polynomials $p_1,...,p_m\in\mathbb{C}[x_1,...,x_n]$.
The discipline underwent a fundamental transformation during the latter
half of the 20th century with the introduction of \emph{schemes}.
This work was spear-headed by Alexendre Grothendieck and led to many incredible
achievements in geometry and number theory. Schemes can be seen as a generalization of
varieties in several ways,
but their standard presentation as ``locally ringed spaces with an affine cover''
somewhat blurs the connection to classical algebraic geometry,
which can make it hard for students learning algebraic geometry to see
in what sense schemes are ``geometric'' objects at all.

There is, however, a different angle for generalization, where the original
motivation of studying solutions to polynomials keeps a more prominent place.
Take a polynomial with integer coefficients like $x^n+y^n-z^n\in\Z[x,y,z]$.
Fermats last theorem tells us that this polynomial only has the trivial solution
$x=y=z=0$ for $n> 2$. This does of course only hold for solutions in the integers.
We might interpret the same polynomial as living in a polynomial ring
$A[x,y,z]$, where $A$ is now any commutative ring (e.g.\ $\mathbb{C}$),
and ask about solutions in $A$.
The corresponding set of solutions is given by
\begin{align*}
  V_{x^n+y^n-z^n}(A)=\{\,(a_1,a_2,a_3)\in A^3\,\vert\, a_1^n+a_2^n=a_3^n\,\}
\end{align*}
Moreover, given a morphism of rings $\varphi\in\mathsf{Hom}(A,B)$,
we can map a solution in $A$, $(a_1,a_2,a_3)\in V_{x^n+y^n-z^n}(A)$,
to a solution $(\varphi(a_1),\varphi(a_2),\varphi(a_3))\in V_{x^n+y^n-z^n}(B)$ in $B$.
In categorical terms, our polynomial defines a \emph{functor} from the
category of commutative rings to the category of sets, mapping a ring $A$ to
the set $V_{x^n+y^n-z^n}(A)$ of solutions in $A$.

This functor $V_{x^n+y^n-z^n}(\_):\mathsf{CommRing}\to\mathsf{Set}$ turns out to
be a very familiar categorical object. For a ring $A$, homomorphisms
$\mathsf{Hom}(\Z[x,y,z],A)$ are in bijection with $A^3$
(every morphism is determined by its values on $x$, $y$ and $z$)
and this induces a bijection of morphisms
$\mathsf{Hom}(\nicefrac{\Z[x,y,z]}{\langle x^n+y^n-z^n\rangle},A)$
with $V_{x^n+y^n-z^n}(A)$. Now, a functor from $\mathsf{CommRing}$ to $\mathsf{Set}$
is nothing but a presheaf on the opposite category $\mathsf{CommRing}^{op}$.
In this presheaf category we can look at the Yoneda embedding or representable of
the quotient ring
$R=\nicefrac{\Z[x,y,z]}{\langle x^n+y^n-z^n\rangle}$,
which we will denote by $\mathsf{Sp}(R)$.
By the above argument, we get a natural isomorphism of presheaves
$\mathsf{Sp}(R)=\mathsf{Hom}(R,\_)\cong V_{x^n+y^n-z^n}(\_)$.

In the category of functors from commutative rings to sets we can thus study
solutions of systems of integer polynomials by looking at representable functors
of quotients $\nicefrac{\Z[x_1,...,x_n]}{\langle p_1,...,p_m\rangle}$.
Algebraic geometers call these representables
\emph{absolute affine algebraic spaces} \cite{GrothendieckBuffalo}.
From these we can generalize to schemes
(schemes over $\Z$ or absolute schemes to be more precise).
Affine schemes are readily defined as representables of arbitrary commutative rings.
From these we can build general schemes as
presheaves on $\mathsf{CommRing}^{op}$ that are ``local'' and have
an ``open cover'' by affine schemes in some appropriate sense.

Among the proponents of using the functor of points approach as the primary
definition of schemes was Grothendieck himself \cite{GrothendieckBuffalo},
because, in the terms of Lawvere \cite{LawvereMail}, it does not require
``the baggage of prime ideals and the spectral space, sheaves of local rings, coverings and patchings, etc.''
Yet, most standard sources \cite{EisenbudHarris,GoertzWedhorn,Hartshorne,RisingSea}
for students learning algebraic geometry start with precisely this ``baggage''.
To our knowledge, the same can be said for existing formalizations of schemes
\cite{SchemesCoq, SchemesHOL, SchemesLean, SchemesUniMath, ZeunerMortberg23}.
We want to close this gap and present a first formalization
of the functor of points approach.

Admittedly, part of the appeal of schemes as locally ringed spaces
as a formalization target for proof assistants
is that they are such a layered, involved notion,
while at the same time being a point of departure
for formalizing a plethora of interesting research level mathematics.
The first full formalization of schemes in \Lean's \mathlib~by
Buzzard et.\ al.\ \cite{SchemesLean} revealed
certain bottlenecks that occur when defining schemes this way.
As these bottlenecks might be addressed very differently in different proof assistants,
schemes have become somewhat of a benchmark problem, inspiring (partial)
formalizations in \IsabelleHOL~\cite{SchemesHOL},
\Coq's \UniMath~library \cite{SchemesUniMath} and \CubicalAgda~\cite{ZeunerMortberg23}.

It is worth noting that, except for the \CubicalAgda-formalization \cite{ZeunerMortberg23},
all of the above formalizations are non-constructive as they follow
the presentation of Hartshorne's standard ``Algebraic Geometry'' \cite{Hartshorne}.
In \cite{ZeunerMortberg23}, the authors manage to stay constructive by using
``ringed lattices'' \cite{ConstrSchemes} instead of locally ringed spaces,
but the formalization only includes affine schemes.
The functor of points approach is often taken to be more amenable for constructive mathematics.\footnote{See e.g.\ the discussion  \url{https://github.com/agda/cubical/issues/657}}
Indeed, to our knowledge we present the first fully constructive formalization
of quasi-compact and quasi-separated schemes (qcqs-schemes), an important subclass of
schemes that is sufficient for a large portion of modern algebraic geometry.\footnote{See
  e.g.\ Deligne and Boutot's \cite{EGA4.5}, where schemes are always assumed qcqs.}

Nowadays there exist extensive algebra and category theory libraries for many
of the major proof assistants, providing a lot of the necessary tools
to formally define schemes using the functor of points approach.
The bottlenecks of defining schemes as locally ringed spaces,
disappear when following the functor of points approach.
One problem that occurs, however, is that the
category of functors from rings to sets is not locally small, since
arrows between two such functors are natural transformations,
i.e. families of functions indexed by the ``big'' type of all rings in
a given universe. As a result, one has to address size issues.
The restriction to qcqs-schemes comes from the fact that in a predicative
type theory like \CubicalAgda's, one is led to make certain finiteness assumptions
in order to deal with the arising size issues appropriately.

Our work is completely formalized in \CubicalAgda~and
all results are integrated in the \agdaCubical~library.\footnote{\label{footnote:formalization} The formalization is summarized in:\\
  \url{https://github.com/agda/cubical/blob/master/Cubical/Papers/FunctorialQcQsSchemes.agda} \\
  A clickable rendered version 
  can be found  here:\\
  \url{https://agda.github.io/cubical/Cubical.Papers.FunctorialQcQsSchemes.html}}
We will comment on our usage of \CubicalAgda~in \cref{subsec: CubicalAgda},
but we want to stress that the formalization does not rely on cubical features.
It should be possible to more or less directly translate the formalization
into a system implementing Homotopy Type Theory and Univalent Foundations
of the HoTT book\cite{HoTTBook} or into \UniMath~\cite{UniMath}.
Our work can be understood as being in line with the goals of
Voevodsky's Foundations library \cite{VoevodskyFoundationsLib}:
Developing a library of constructive set-level mathematics
based on Univalent Foundations.

As result of working fully constructive, our presentation deviates
from the standard ``Introduction to Algebraic Geometry and Algebraic Groups''
by Demazure and Gabriel \cite{DemazureGabriel}.
Our main contributions and design choices can be summarized as follows:
\begin{itemize}
\item In \cref{sec: ZFunctors} we define the category of $\Z$-functors,
  differing slightly from Demazure and Gabriel.
  Roughly speaking, we chose to work with a fully-faithful
  spectrum functor with the caveat that this functor does not have
  a left adjoint but only a relative adjoint.
\item In \cref{sec: localZFunctors} we define the notion of coverage and sheaf wrt.\ a
  coverage. We define the Zariski coverage on $\mathsf{CommRing}^{op}$.
  Restricting from $\Z$-functors to Zariski sheaves can be seen as
  introducing a locality condition, akin to restricting
  from ringed to locally ringed spaces.
  We show that affine schemes are local, i.e.\ that
  representable presheaves are sheaves wrt.\
  the Zariski coverage. For this one can reuse some key algebraic lemmas,
  first formalized in \cite{ZeunerMortberg23} to show the sheaf property of
  the structure sheaf of an affine scheme.
\item In \cref{sec: CompOpensAndSchemes} we define the notions of compact open
  subfunctor, cover of compact opens and finally qcqs-scheme.
  It is in this section that we deviate substantially from the standard sources.
  We argue that the above notions are most conveniently defined
  by using an appropriate classifier in the topos theoretic sense.
  Since we have a small Zariski lattice but no small type of radical ideals
  in \CubicalAgda, we can only classify \emph{compact opens}.
  So far, these only appear in the literature on synthetic algebraic geometry
  (\cite[Def.\ 19.15]{BlechschmidtPhD} and \cite[Def.\ 4.2.1]{SAG}),
  but they turn out to be very useful for our purposes as well.
\item In \cref{sec: openSubschemes} we prove that compact open subfunctors
  of affine schemes are qcqs-schemes. We give a point-free proof that the classifier
  for compact opens is separated, only using the universal property of the Zariski
  lattice. This gives us that compact opens of affine schemes are sheaves.
  The fact that compact opens of affines have an affine cover essentially follows from
  the Yoneda lemma.
\end{itemize}

\section{Background}\label{sec: background}
We begin by giving some helpful background.
First, we discuss the \CubicalAgda~proof assistant and how it is used in the formalization.
We then briefly present two algebraic constructions from the \agdaCubical~library,
first formalized and described by Zeuner and Mörtberg in \cite{ZeunerMortberg23},
that play a key role in this paper as well:
localizations of commutative rings and the Zariski lattice.

\subsection{Univalent type theory in \CubicalAgda}\label{subsec: CubicalAgda}
For understanding the details of our formalization, it is worth knowing about
certain particularities of the \CubicalAgda~proof assistant and its
library. We will restrict ourselves to the features that are relevant for this paper.
Readers familiar with \CubicalAgda~or Homotopy Type Theory and
Univalent Foundations (HoTT/UF) can safely skim this section.
Readers interested in more details are referred to \cite{CubicalAgda2}.

\CubicalAgda~is a rather recent extension of the \Agda~proof assistant
with fully constructive support of the univalence principle
and higher inductive types (HITs). The notation used in this paper
is inspired by \Agda's syntax and the conventions of the \agdaCubical
library but we have taken the liberty to simplify the syntax
and omit projections whenever possible in order to increase readability.
For example we will write \func{CommRing} to denote both the type
and the category of commutative rings and an element $R:\func{CommRing}$
will denote both the ring with its structure and the carrier-type of $R$,
i.e.\ we write $f:R$ for its elements. For the universe at level~$\ell$
we write $\func{Type}~\ell$ or $\func{Type}_\ell$, and similarly
$\func{CommRing}_\ell$ for commutative rings whose carrier type lives in $\func{Type}_\ell$.
For a family $B:A\to\Type~\ell$, we denote the dependent pair type over this family
as $\tySigmaNoParen{x}{A}{B(x)}$.

For definitional equalities we use $=$, while
propositional equalities are written using $\func{≡}$.
Note that \CubicalAgda~does not use Martin-Löf's inductive identity type
\cite{MartinLof75itt} for expressing propositional equalities,
but rather so-called \emph{path} types. These path types are defined in
terms of a primitive interval type \func{I}, which allows one to
conveniently define \emph{dependent} path types.
In this formalization we will not make direct use of the interval or
dependent path types. However, path types do entail function extensionality,
the right behavior of equalities of dependent pairs
and other useful principles, which we will use freely.\footnote{These
  principles also follow from univalence albeit with a slightly different
  computational behavior.}

\CubicalAgda~does not come with a designated universe of propositions
and in fact we cannot generally expect propositional equality types, or rather path types,
to be propositions in any sensible way.
This is because \CubicalAgda~proves univalence and thus disproves
\emph{Uniqueness of Identity Proofs} (UIP),
also known as Streicher's axiom K \cite{Streicher93}.
We can, however, internally define (proof-relevant) propositions as subsingleton types
and sets as types whose equalities are propositions, i.e.\ as types satisfying UIP:
\vspace{-\abovedisplayskip}
\begin{center}
\begin{minipage}[t]{1.0\linewidth}
\begin{minipage}{.4\linewidth}
  \ExecuteMetaData[agda/latex/Background.tex]{prop}
\end{minipage}
\begin{minipage}{.5\linewidth}
  \ExecuteMetaData[agda/latex/Background.tex]{set}
\end{minipage}
\end{minipage}
\end{center}
The type (or universe) of propositions of level $\ell$ is defined as
$\func{hProp}~\ell = \tySigma{A}{\Type~\ell}{\func{isProp}~A}$,
A \emph{subset} of $A$, where $\func{isSet}~A$, is a function
$S:A\to\hProp~\ell$. With some abuse of notation we
will identify a subset $S$ with the corresponding $\Sigma$-type
$\tySigma{a}{A}{a\in S}$,
where $a\in S$ is the proposition (type of proofs) that $a$ is actually in $S$.
We thus write $a:S$ for elements
of $S$ when the proof of $a:A$ belonging to $S$ can be ignored.

Univalence implies that there are types, which are neither propositions nor sets.
These types are said to have a higher h-level (homotopy level \cite{Voevodsky10bonn})
than sets. One can use the so-called \emph{structure identity principle} \cite[Sec.\ 9.8]{HoTTBook}
to prove that this holds true for types of algebraic or categorical structures like
commutative rings or $\Z$-functors.\footnote{See \cite{POPLPaper} for an implementation of
the structure identity principle in \CubicalAgda.}
However, we want to stress that this does not affect
the formalization presented in this paper.

We do make extensive use higher inductive types (HITs), the other main
addition of HoTT/UF to dependent type theory alongside univalence.
In particular, we require two HITs: set-quotients and propositional truncations.
Set-quotients are needed to define localizations of rings and the Zariski lattice,
which we will describe in \cref{subsec: ZariskiLattice}. We will not go into details
on how set-quotients are defined. It suffices to know that as long as we quotient
sets by proposition-valued equivalence relations and only consider maps
from those quotients into other sets, everything works as one would expect from quotients.
The other HIT, propositional truncation, turns any type into a proposition:
\ExecuteMetaData[agda/latex/Background.tex]{propTrunc}
This is needed in HoTT/UF to express existential quantification,
as using $\Sigma$-types is often too strong. We follow the convention
and say ``there \emph{merely} exists $x:A$ such that $P(x)$'', if we have an inhabitant of
\begin{align*}
  \tyExistsNoParen{x}{A}{P(x)} = \ptrunc{\tySigmaNoParen{x}{A}{P(x)}}
\end{align*}
Note that in general this does not let us extract a witness $x:A$.
We will discuss an example showcasing the proper use of propositional
truncation in \cref{rem: affineCover}.

\subsection{Localizations and the Zariski lattice}\label{subsec: ZariskiLattice}
Our formalization builds on a lot of commutative algebra and category theory formalized
in the \agdaCubical~library that we will presuppose in this paper.
In particular, we will assume familiarity with presheaves, the Yoneda lemma and basic
ideal theory of rings and we will not comment on their implementation in the \agdaCubical library.
There are two particular constructions, first described in \cite{ZeunerMortberg23},
that are of special importance to this project and we will briefly describe them here.

The first, localizations of commutative rings, are a way of making elements invertible
by adding fractions.
In this paper we only need the special case of inverting a single element.
For a  ring $R$ and $f:R$ the \emph{localization of} $R$ \emph{away from} $f$ is the ring
$\locEl{R}{f}$ of fractions $\nicefrac{r}{f^n}$ where the denominator is a power of $f$.
Equality of two fractions is slightly different than for fractions of integers
and can be stated as:\footnote{This is to account for zero-divisors and the case where
  $f$ is nilpotent.}
\begin{align*}
  \tyPath{\nicefrac{r}{f^n}}{\nicefrac{r'}{f^m}} \quad \text{iff} \quad \tyExists{k}{\func{ℕ}}{\tyPath{rf^{k+m}}{r'f^{k+n}}}
\end{align*}
Localizations satisfy a universal property and in our special case it can be stated as:
$\locEl{R}{f}$ is the initial $R$-algebra where $f$ becomes invertible. This means that
for any ring $A$ with a homomorphism $\varphi:\mathsf{Hom}(R,A)$ such that
$\varphi(f)\in A^\times$ (i.e.\ $\varphi(f)$ is a unit/invertible),
there is a unique $\psi:\mathsf{Hom}(\locEl{R}{f},A)$ making
the following diagram commute
\[
\begin{tikzcd}
  &R\arrow[ld,"\nicefrac{\_}{1}"']\arrow[rd,"\varphi"]&\\
  \locEl{R}{f}\arrow[rr,dashed,"\exists!~\psi",swap]&&A
\end{tikzcd}
\]
where $\nicefrac{\_}{1}:\mathsf{Hom}(R,\locEl{R}{f})$ is the canonical morphism
mapping $r:R$ to the fraction $\nicefrac{r}{1}$.
As shown in \cite{ZeunerMortberg23}, formalizing localizations with the help of
set-quotients is straightforward.

The second construction, the Zariski lattice
associated to a ring is slightly more delicate. By a standard argument in classical
algebraic geometry there is a one-to-one correspondence between Zariski open sets
of $\mathsf{Spec}(R)$ and radical ideals of $R$. An ideal $I\subseteq R$
is radical if $I=\sqrt{I}=\{\, x\in R \;\vert\; \exists n>0:x^n\in I\,\}$.
Furthermore, the \emph{compact open} subsets of $\mathsf{Spec}(R)$ correspond
radicals of \emph{finitely generated} ideals. This correspondence is in fact
an isomorphism of lattices. Set-theoretic union and intersection of compact opens
correspond to addition and multiplication of finitely generated ideals.

This means that we can define this so-called \emph{Zariski lattice} $\ZL$ without having to define
$\mathsf{Spec}(R)$ and its topology first: Elements of $\ZL$ are generators $f_1,...,f_n:R$
quotiented by the relation that relates another list of generators $g_1,...,g_m:R$ if
$\tyPath{\sqrt{\langle f_1,\dots,f_n\rangle}}{\sqrt{\langle g_1,\dots,g_m\rangle}}$.
The equivalence class of the generators $f_1,...,f_n:R$ is denoted by $D(f_1,...,f_n):\ZL$
and the join on $\ZL$ is given by $D(f_1,...,f_n)\vee D(g_1,...,g_m)=D(f_1,...,f_n,g_1,...,g_m)$.
The ``basic open'' $D(f)$ is the equivalence class corresponding
to the radical of the principle ideal $\sqrt{\langle f\rangle}$, with $D(1)$ being
the top element of $\ZL$ corresponding to the 1-ideal.
The basic opens form a basis of $\ZL$, as $D(f_1,...,f_n)=\bigvee_{i=1}^n D(f_i)$.

This definition is due to Espa\~{n}ol \cite{Espanol83}, but it has the disadvantage that
it uses equality of ideals to define the quotienting relation. In the predicative type
theory of \CubicalAgda~the type of ideals of $R$ lives in the
universe above $R$ an so does the equality type
between two ideals. This can be avoided by slightly rewriting
the equivalence relation, as shown in \cite{ZeunerMortberg23},
giving us $\ZL:\func{DistLattice}_\ell$ for $R:\func{CommRing}_\ell$.

Joyal \cite{JoyalZarLat} observed that the Zariski lattice has a certain universal property
that can be stated in terms of \emph{supports}.
A map $d:R\to L$ from $R$ into a distributive lattice $L$ is called a support if it satisfies:
\begin{align}
  & \tyPath{d(1)}{\top} \;\text{ and }\; \tyPath{d(0)}{\bot} \\
  & \forall (f~g :R)\to \tyPath{d(fg)}{d(f)\wedge d(g)} \\
  & \forall (f~g :R)\to d(f+g)\leq d(f)\vee d(g)
\end{align}
The map $D:R\to\ZL$ sending $f:R$ to the equivalence class $D(f)$ satisfies conditions (1)-(3)
and it is a universal support in the sense for any other support $d:R\to L$
there is a unique lattice homomorphism $\varphi:\ZL\to L$ such that
the following commutes
\[
\begin{tikzcd}
  & R \arrow[dl,"D"']\arrow[dr,"d"] & \\
  \ZL \arrow[rr,dashed, "\exists!~\varphi"'] && L
\end{tikzcd}
\]
The partial order defined on the Zariski lattice is connected to localizations as for $f,g:R$
\begin{align*}
  D(g)\leq D(f)\;\Leftrightarrow\;\sqrt{\langle g\rangle}\subseteq\sqrt{\langle f\rangle}\;\Leftrightarrow\;g\in\sqrt{\langle f\rangle} \;\Leftrightarrow\; \nicefrac{f}{1}\in\locEl{R}{g}^\times
\end{align*}
In the special case where $g=1$, this gives us $\tyPath{D(1)}{D(f)}$ iff $f\in R^\times$.
We will utilize this fact in order to interpret the basic opens as affine subschemes
in \cref{def: standardOpen}. A slight generalization of this fact
that we will exploit repeatedly is that
for $f_1,...,f_n:R$
\begin{align*}
  \tyPath{D(1)}{D(f_1,...,f_n)} \;\Leftrightarrow\; 1\in\langle f_1,...,f_n\rangle
\end{align*}
This concludes our discussion of the preliminaries required to formalize
qcqs-schemes following the functor of points approach.

\section{$\Z$-Functors}\label{sec: ZFunctors}
Let us turn to our goal of defining qcqs-schemes
as well-behaved functors from rings to sets.
As size issues are unavoidable in the functor of points approach,
we will be rather explicit about universe levels in this paper.
For the remainder we will fix a universe level $\ell$ and work
over commutative rings in the corresponding universe $\func{CommRing}_\ell$.

\begin{definition}\label{def: ZFunctors}
  The category of $\Z$-functors, denoted $\func{ℤFunctor}_\ell$,
  is the category of functors from $\func{CommRing}_\ell$ to $\func{Set}_\ell$.
  We write $\func{Sp}:\func{CommRing}_\ell^{op}\to\func{ℤFunctor}_\ell$
  for the Yoneda embedding and $\Aone :\func{ℤFunctor}_\ell$ for the
  forgetful functor from commutative rings to sets.
  We say that $X:\ZFunctor_\ell$ is an \emph{affine scheme}
  if there merely exists $R:\func{CommRing}_\ell$ such that $X\cong\func{Sp}(R)$.
\end{definition}

\begin{remark}\label{rem: diffDemazureGabriel}
  It is worth noticing that most modern algebraic geometry sources
  (see e.g.\ \cite{EisenbudHarris,GoertzWedhorn,NPNotes,RisingSea}) usually
  omit any reference to universes when discussing the functor of points approach.
  The choice of taking functors from rings to sets in the same universe
  seems perhaps most natural, but actually differs from the standard
  reference on the functor of points approach by Demazure and Gabriel \cite{DemazureGabriel}.
  They essentially take $\Z$-functors to be functors from
  $\func{CommRing}_\ell$ to $\func{Set}_{\ell+1}$.\footnote{They actually assume two
    Grothendieck universes $\mathcal{U}\subseteq\mathcal{V}$.
    As type theoretic universes are usually ``lifted''
    from Grothendieck universes in presheaf models \cite{HofmannStreicher1997},
    our translation only seems natural.}
  Their ``big spectrum functor''
  $\mathsf{Sp}:\func{CommRing}_{\ell+1}^{op}\to(\func{CommRing}_\ell\to\func{Set}_{\ell+1})$
  is defined much like the Yoneda embedding as $\mathsf{Sp}(R)=\mathsf{Hom}(R,\_)$,
  but because of the universe level mismatch it is \emph{not}
  fully faithful. However, this functor has a left adjoint,
  namely the functor that we will define in \cref{def: globalSectionsFun}.
  We decided to differ in our definition of $\Z$-functors
  since \Agda's non-cumulative universes would require explicit lifts
  in a lot of places and it seemed more useful to a have \func{Sp}
  that is fully-faithful.
\end{remark}


\begin{definition}\label{def: globalSectionsFun}
  Let $X:\ZFunctor_\ell$, the \emph{ring of functions}
  $\Ofun(X)$ is the type of natural transformations $\NatTrans{X}{\Aone}$
  equipped with the canonical point-wise operations,
  i.e.\ for $R:\func{CommRing}_\ell$ and $x:X(R)$, addition and multiplication
  of $\alpha,\beta:\NatTrans{X}{\Aone}$ are given by
  \begin{align*}
    (\alpha + \beta)_R(x) ~=~ \alpha_R(x) + \beta_R(x) \quad\quad
    (\alpha \cdot \beta)_R(x) ~=~ \alpha_R(x) \cdot \beta_R(x)
  \end{align*}
  This defines a functor $\Ofun:\ZFunctor_\ell\to\func{CommRing}_{\ell+1}^{op}$,
  whose action on morphisms (natural transformations) is given by precomposition.
\end{definition}
The universal property of schemes is often stated to be:
The global sections functor $\Gamma$ is left adjoint to $\mathsf{Spec}$
and the counit of this adjunction is an isomorphism.
However, this is already true for locally ringed spaces. In a similar fashion
we would like to have an adjunction $\Ofun\dashv\func{Sp}$,
but unfortunately we run into a universe level mismatch.
We still get something that looks a lot like an adjunction.
The proof of the following proposition is straightforward.
\begin{proposition}\label{prop: OSpAdj}
  For $R:\func{CommRing}_\ell$ and $X:\ZFunctor_\ell$ there is an isomorphism
  of types
  \begin{align*}
    {\mathsf{Hom}\big(R,\Ofun(X)\big)}\cong\big(\NatTrans{X}{\func{Sp}(R)}\big)
  \end{align*}
  which is natural in both $R$ and $X$. Moreover, the induced ``counit''
  $\varepsilon_R:\mathsf{Hom}\big(R,\Ofun(\func{Sp}(R))\big)$,
  which is obtained by applying the inverse of above isomorphism
  to the identity transformation $\NatTrans{\func{Sp}(R)}{\func{Sp}(R)}$,
  is an isomorphisms of rings for all $R:\func{CommRing}_\ell$.
\end{proposition}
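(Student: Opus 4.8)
The plan is to construct the isomorphism of types as an instance of (a mild size-aware variant of) the Yoneda lemma. First I would unfold the definitions: a natural transformation $\NatTrans{X}{\func{Sp}(R)}$ assigns to every ring $A$ and every $x : X(A)$ an element of $\func{Sp}(R)(A) = \mathsf{Hom}(R,A)$, naturally in $A$. A ring homomorphism $\varphi : \mathsf{Hom}(R,\Ofun(X))$ is, by \cref{def: globalSectionsFun}, a map sending each generator-free element $r : R$ to a natural transformation $\NatTrans{X}{\Aone}$, i.e.\ to a family of functions $X(A) \to A$ natural in $A$, and this assignment must respect the ring operations. So in both cases the underlying data is ``an element of $A$ depending functorially on $A$, $x : X(A)$, and $r : R$''; the two descriptions just curry this data in the two possible orders. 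Concretely, given $\varphi$, define $\alpha : \NatTrans{X}{\func{Sp}(R)}$ by $\alpha_A(x)(r) := \varphi(r)_A(x)$, and conversely given $\alpha$, define $\varphi$ by $\varphi(r)_A(x) := \alpha_A(x)(r)$. I would then check that $\alpha_A(x) : R \to A$ is a ring homomorphism precisely because $\varphi$ is, that $\alpha$ is natural because each $\varphi(r)$ is, and that $\varphi$ is a ring homomorphism because the operations on $\Ofun(X)$ are defined point-wise; all of this is a routine unwinding, and the two constructions are visibly mutually inverse. Naturality of the bijection in $R$ and in $X$ is likewise a direct diagram chase, using that $\Ofun$ and $\func{Sp}$ act by pre- and post-composition respectively.

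For the second part, I would compute the counit explicitly. Taking $X = \func{Sp}(R)$ and feeding the identity $\NatTrans{\func{Sp}(R)}{\func{Sp}(R)}$ through the inverse isomorphism, the formula $\varphi(r)_A(x) = \alpha_A(x)(r)$ with $\alpha = \mathrm{id}$ gives $\varepsilon_R(r)_A(x) = x(r)$ for $x : \mathsf{Hom}(R,A)$ — that is, $\varepsilon_R$ sends $r : R$ to the natural transformation ``evaluate at $r$'', $\mathrm{ev}_r : \func{Sp}(R) \Rightarrow \Aone$. To see this is a ring isomorphism $R \xrightarrow{\sim} \Ofun(\func{Sp}(R))$, I would appeal to the Yoneda lemma: natural transformations $\func{Sp}(R) \Rightarrow \Aone$ are in bijection with $\Aone(R) = R$ (the underlying set of $R$), and the bijection is exactly $\alpha \mapsto \alpha_R(\mathrm{id}_R)$, whose inverse is $r \mapsto \mathrm{ev}_r$; so $\varepsilon_R$ is the Yoneda bijection and in particular a bijection. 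It remains to check it is a ring homomorphism, which is immediate since both the ring structure on $\Ofun(\func{Sp}(R))$ (point-wise operations, \cref{def: globalSectionsFun}) and the maps $\mathrm{ev}_r$ are defined operation-wise: $\mathrm{ev}_{r+s} = \mathrm{ev}_r + \mathrm{ev}_s$ and $\mathrm{ev}_{rs} = \mathrm{ev}_r \cdot \mathrm{ev}_s$ hold on the nose.

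The only real subtlety — and the reason the statement says ``isomorphism of types'' rather than ``adjunction'' — is the universe bookkeeping flagged already in \cref{rem: diffDemazureGabriel}: $\Ofun(X)$ lives in $\func{CommRing}_{\ell+1}$ while $R$ lives in $\func{CommRing}_\ell$, so $\mathsf{Hom}(R,\Ofun(X))$ and $\NatTrans{X}{\func{Sp}(R)}$ are a priori types at different levels, and one cannot phrase this as a hom-set adjunction internally. I would therefore state and use the plain Yoneda lemma at the level of types/sets (which is level-polymorphic and causes no trouble), being careful that the isomorphism is between the appropriately-lifted types; in \CubicalAgda~this is where explicit lifts would appear, but mathematically nothing deep happens. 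I expect this size juggling, rather than any conceptual point, to be the main thing to get right; everything else is the Yoneda lemma plus the observation that point-wise ring operations are preserved by (un)currying.
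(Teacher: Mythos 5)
Your proof is correct and is exactly the ``straightforward'' argument the paper alludes to without writing out: the isomorphism is currying/uncurrying of the data $\varphi(r)_A(x) = \alpha_A(x)(r)$, with the pointwise ring structure on $\Ofun(X)$ making the two sides' algebraic conditions match, and the counit is identified with the Yoneda bijection $r \mapsto \mathrm{ev}_r$. Your remarks on the universe mismatch also agree with \cref{rem: relAdj}, so nothing is missing.
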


\begin{remark}\label{rem: relAdj}
  \cref{prop: OSpAdj} type-checks because the type of ring homomorphisms
  is universe polymorphic, meaning it can take rings living in different
  universes as arguments. The same holds for the type of isomorphisms/equivalences
  between two types.   From a categorical perspective, we get
  a \emph{right relative adjunction}\footnote{See
    \url{https://ncatlab.org/nlab/show/relative+adjoint+functor}}
  $\Ofun\dashv_{\,\,i}\func{Sp}$ with
  respect to the inclusion, or lift functor,
  $i:\func{CommRing}_\ell^{op}\to\func{CommRing}_{\ell+1}^{op}$.
  This is why we only get a counit, but no unit.
\end{remark}

\section{Local $\Z$-functors}\label{sec: localZFunctors}
Functorial (qcqs-) schemes are sheaves with respect to the Zariski coverage.
The notion of coverage (also called a Grothendieck pre-topology) generalizes
point-set topologies to arbitrary categories.
Roughly speaking, a coverage on a category $\mathcal{C}$
associates to each object $U:\mathcal{C}$
a family $\mathsf{Cov}(U)$ of covers. A cover $(U_i\to U)_{i: I}:\mathsf{Cov}(U)$
is a family of maps into $U$. These families $\mathsf{Cov}(U)$ should satisfy certain
closure properties. If $\mathcal{C}$ has pullbacks
then covers should be closed under pullbacks and a presheaf
$\mathcal{F}:\mathcal{C}^{op}\to\mathsf{Set}$ can be defined to be a sheaf
if for any $(U_i\to U)_{i:I}: \mathsf{Cov}(U)$ we get and equalizer diagram
\begin{align*}
  \mathcal{F}(U) \to \prod_{i: I}\mathcal{F}(U_i)\rightrightarrows
                                              \prod_{i,j: I}\mathcal{F}(U_i \times_U U_j)
\end{align*}
In the case where $\mathcal{C}$ is $\mathsf{Open}(X)$,
the poset of open subsets of a topological space $X$,
we get a canonical coverage:
A family of opens $(U_i\subseteq U)_{i\in I}$ is in $\mathsf{Cov}(U)$
if and only if $\bigcup_{i\in I} U_i = U$. Pullbacks in $\mathsf{Open}(X)$
are given by set-theoretic intersection $\cap$ and we recover the usual
definition of when a presheaf $\mathcal{F}:\mathsf{Open}(X)^{op}\to\mathsf{Set}$
is a sheaf.

The formalization of coverages and sheaves in the \agdaCubical library
follows the nLab\footnote{See \url{https://ncatlab.org/nlab/show/coverage}}
and Johnstone's classic ``Sketches of an elephant'' \cite[C2]{JohnstoneSketchesVolII}.
The advantage of this approach is that it even works for categories without pullbacks.
As it turns out, it also lets us conveniently
define the Zariski coverage and prove that representables are Zariski sheaves.
For now, let us fix an arbitrary category $\mathcal{C}$.

\begin{definition}\label{def: Coverage}
  A \emph{cover} on an object $c:\mathcal{C}$ consists of an index type $I$
  and for each $i:I$ an element in the slice category $\nicefrac{\mathcal{C}}{c}$,
  i.e.\ an arrow $f_i:\mathcal{C}(c_i,c)$.
  A \emph{coverage} on $\mathcal{C}$ consist of a family of covers for each
  $c:\mathcal{C}$ satisfying \emph{pullback stability}:
  Given a cover
  $\{\,f_i:\mathcal{C}(c_i, c)\,\}_{i:I}$ of $c$ and arrow $f:\mathcal{C}(d,c)$,
  there merely exists a cover $\{\,g_j:\mathcal{C}(d_j, d)\,\}_{j:J}$ of $d$ such that
  for each index $j:J$ there merely exists an index $i:I$ and an arrow
  $h_{ij}:\mathcal{C}(d_j,c_i)$ with $\tyPath{f_i\circ h_{ij}}{f\circ g_j}$.
\end{definition}
%
Pullback stability can also be stated as: Given an arrow
$f:\mathcal{C}(d,c)$ and a cover on $c$, we can take the \emph{sieve} generated
by this cover and pull it back to a sieve on $d$. Then there exists a cover
on $d$ refining the pulled back sieve on $d$. Since sieves
are not required for the remainder of the paper, we decided to unfold
the definition of pullback stability and state it without recourse to sieves.
We refer the interested reader to the formalization.
We now define what it means to be sheaf with respect to a fixed coverage on $\mathcal{C}$.

\begin{definition}\label{def: compFam}
  Let $P$ be a presheaf on $\mathcal{C}$.
  Let $c:\mathcal{C}$ and $\{\,f_i:\mathcal{C}(c_i, c)\,\}_{i:I}$ be a cover.
  A \emph{compatible family}\footnote{Also called a matching family:
    \url{https://ncatlab.org/nlab/show/matching+family}}
  is a dependent function $x:(i:I)\to P(c_i)$, i.e.\ a family of elements $x_i:P(c_i)$,
  such that for each pair of indices $i,j:I$ and arrows $g_i:\mathcal{C}(d,c_i)$
  and $g_j:\mathcal{C}(d,c_j)$ with $\tyPath{f_j g_j}{f_i g_i}$, we have
  $\tyPath{\rest{x_j}{g_j}}{\rest{x_i}{g_i}}$ (in $P(d)$).
  We denote the type of compatible families over a cover $\{\,f_i:\mathcal{C}(c_i, c)\,\}_{i:I}$
  by \normalfont{$\func{CompatibleFam}^P\big(\{\,f_i:\mathcal{C}(c_i, c)\,\}_{i:I}\big)$}.
\end{definition}
For an element $x:P(c)$ we get an induced compatible family by taking
the restrictions $x_i=\rest{x}{f_i}$ for $i:I$. The compatibility follows
directly from the presheaf property of $P$.
This constructions gives us a map
$\sigma_P:P(c) \to \func{CompatibleFam}^P\big(\{\,f_i:\mathcal{C}(c_i, c)\,\}_{i:I}\big)$.
We can now conveniently define sheaves in terms of the map $\sigma$.

\begin{definition}\label{def: isSheaf}
  A presheaf $P$ is a \emph{sheaf} if for all $c:\mathcal{C}$ and covers
  $\{\,f_i:\mathcal{C}(c_i, c)\,\}_{i:I}$, the canonical map $\sigma_P$
  is an isomorphism.
\end{definition}

\begin{definition}\label{def: subcanonical}
  A coverage on $\mathcal{C}$ is called \emph{subcanonical}
  if for all $c:\mathcal{C}$ the Yoneda embedding of $c$ is a sheaf
  with respect to the coverage.
\end{definition}
In this paper we are interested in a particular example of a coverage on
the opposite category of commutative rings.
Covers of a ring $R$ will come from finite lists of generators of the $1$-ideal.
Classically, this corresponds to the fact that any open cover of an affine scheme
is of the form $\mathsf{Spec}(R)=\bigcup_{i=1}^n D(f_i)$ with
$1\in\langle f_1,...,f_n\rangle$ (because $\mathsf{Spec}(R)$ is quasi-compact).
We call a finite list of elements $f_1,...,f_n:R$ such that $1\in\langle f_1,...,f_n\rangle$
a \emph{unimodular vector}.

\begin{definition}\label{def: ZariskiCoverage}
  The \emph{Zariski coverage} on $\mathcal{C}=\func{CommRing}_\ell^{op}$
  is given by:
  \begin{itemize}
  \item For each $R:\func{CommRing}_\ell$, covers are indexed by the type
    of unimodular vectors over $R$.
  \item For each unimodular vector $f_1,...,f_n:R$, the associated cover of $R$
    is given by the reversed canonical morphisms $\nicefrac{\_}{1}:\locEl{R}{f_i}\to R$,
    indexed by $i:\func{Fin}~n$, the finite $n$-element type.
  \item For a unimodular vector $f_1,...,f_n:R$ the pullback along a morphism
    $\varphi:\mathsf{Hom}(R,A)$ is the vector $\varphi(f_1),...,\varphi(f_n):A$,
    which is easily shown to be unimodular as well.
  \end{itemize}
  A presheaf $X:\ZFunctor_\ell$ is called \emph{local} if it is a sheaf
  wrt.\ the Zariski coverage.
\end{definition}

\begin{lemma}\label{lem: pullbackStability}
  The Zariski coverage is stable under pullbacks.
\end{lemma}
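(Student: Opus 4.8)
The plan is to take as the refining cover exactly the pullback cover already prescribed in \cref{def: ZariskiCoverage}, and to obtain the required refinement morphisms directly from the universal property of localization recalled in \cref{subsec: ZariskiLattice}. Concretely, fix $R : \func{CommRing}_\ell$, a unimodular vector $f_1, \dots, f_n : R$ giving the cover of $R$ by the reversed canonical maps $\locEl{R}{f_i} \to R$ (indexed by $i : \func{Fin}~n$), and an arrow out of $R$ in $\func{CommRing}_\ell^{op}$, i.e.\ a ring homomorphism $\varphi : \mathsf{Hom}(R,A)$. Applying $\varphi$ to an identity $1 = \sum_i r_i f_i$ witnessing unimodularity gives $1 = \sum_i \varphi(r_i)\,\varphi(f_i)$, so $\varphi(f_1), \dots, \varphi(f_n) : A$ is again unimodular and hence determines a Zariski cover of $A$ --- indexed by the same type $\func{Fin}~n$ --- by the reversed canonical maps $g_j : \locEl{A}{\varphi(f_j)} \to A$. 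This is the cover of $A$ I would supply for pullback stability.

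For the refinement condition of \cref{def: Coverage}, given $j : \func{Fin}~n$ I would take the index $i = j$ and build a morphism $h_{jj} : \func{CommRing}_\ell^{op}\big(\locEl{A}{\varphi(f_j)}, \locEl{R}{f_j}\big)$, i.e.\ a ring homomorphism $\locEl{R}{f_j} \to \locEl{A}{\varphi(f_j)}$, as follows. The composite of $\varphi : R \to A$ with the canonical map $\nicefrac{\_}{1} : A \to \locEl{A}{\varphi(f_j)}$ sends $f_j$ to $\nicefrac{\varphi(f_j)}{1}$, which is a unit in $\locEl{A}{\varphi(f_j)}$ by construction of the localization. By the universal property of $\locEl{R}{f_j}$ as the initial $R$-algebra in which $f_j$ becomes invertible, this composite factors uniquely through $\nicefrac{\_}{1} : R \to \locEl{R}{f_j}$; the mediating homomorphism is the desired $h_{jj}$, and its defining triangle is --- once one accounts for the fact that composition in $\func{CommRing}_\ell^{op}$ is the reverse of composition in $\func{CommRing}$ --- exactly the equation $f_i \circ h_{ij} = f \circ g_j$ (with $i = j$ and $f = \varphi$) required by \cref{def: Coverage}. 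Since we exhibit explicit witnesses both for ``there merely exists a cover'' and for ``there merely exists an index $i$'', the propositional truncations in the statement of pullback stability are discharged at once.

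I do not expect a genuinely hard step here: the only thing to watch is the bookkeeping around the opposite category --- which ring homomorphism points which way, so that the triangle produced by the universal property gets lined up with the equation of \cref{def: Coverage} rather than its mirror image. In the formalization this amounts to unfolding the localization universal property (or invoking an already-available functoriality-of-localization lemma) and feeding the resulting data to the constructor for pullback stability.
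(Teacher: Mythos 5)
Your proposal is correct and follows essentially the same route as the paper: the refining cover of $A$ is the one given by the unimodular vector $\varphi(f_1),\dots,\varphi(f_n)$, and the refinement morphisms $\locEl{R}{f_j}\to\locEl{A}{\varphi(f_j)}$ come from the universal property of localization applied to the composite $R\to A\to\locEl{A}{\varphi(f_j)}$, which inverts $f_j$. The only difference is that you spell out the unimodularity of the pulled-back vector and the opposite-category bookkeeping, which the paper leaves implicit.
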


\begin{proof}
  Let $R, A:\func{CommRing}_\ell$, $f_1,...,f_n:R$ be a unimodular vector and
  $\varphi:\mathsf{Hom}(R,A)$.
  The universal property of localization induces
  ring morphisms $\psi_i:\mathsf{Hom}\big(\locEl{R}{f_i},\locEl{A}{\varphi(f_i)}\big)$
  such that the following diagram commutes (in $\func{CommRing}_\ell^{op}$)
  \[\begin{tikzcd}
	\locEl{A}{\varphi(f_i)} & \locEl{R}{f_i} \\
	A & R
	\arrow["\nicefrac{\_}{1}", from=1-1, to=2-1]
	\arrow["\nicefrac{\_}{1}", from=1-2, to=2-2]
	\arrow["\psi_i", from=1-1, to=1-2]
	\arrow["{\varphi}", from=2-1, to=2-2]
  \end{tikzcd}\]
\end{proof}
The key result of this section uses an algebraic fact that can be found in many textbooks,
such as \cite[p. 125]{SheavesInGeometryAndLogic},
and was already formalized in \CubicalAgda~to prove \cite[Lemma 15]{ZeunerMortberg23}.

\begin{theorem}\label{thm: subcanonicalZariskiCoverage}
  The Zariski coverage is subcanonical, i.e.\ $\func{Sp}(A)$ is local for $A:\func{CommRing}_\ell$.
\end{theorem}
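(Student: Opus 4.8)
The plan is to unfold the sheaf condition for the representable $\func{Sp}(A) = \mathsf{Hom}(A,\_)$ and reduce it to a purely algebraic equalizer statement about localizations. Fix $A:\func{CommRing}_\ell$. By \cref{def: isSheaf,def: ZariskiCoverage}, I must show that for every $R:\func{CommRing}_\ell$ and every unimodular vector $f_1,\dots,f_n:R$ the canonical map $\sigma$ from $\mathsf{Hom}(A,R)$ to the type of compatible families for $\func{Sp}(A)$ over the Zariski cover $\{\,\nicefrac{\_}{1}:\locEl{R}{f_i}\to R\,\}_{i:\func{Fin}\,n}$ is an isomorphism; since both types are sets, it is enough to show $\sigma$ is a bijection. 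Unfolding \cref{def: compFam} in $\func{CommRing}_\ell^{op}$ (minding the variance), such a compatible family is a tuple of ring homomorphisms $x_i:\mathsf{Hom}(A,\locEl{R}{f_i})$ such that, for every ring $D$ and every pair $g_i:\locEl{R}{f_i}\to D$, $g_j:\locEl{R}{f_j}\to D$ with $g_i\circ(\nicefrac{\_}{1}) = g_j\circ(\nicefrac{\_}{1})$ as maps $R\to D$, one has $g_i\circ x_i = g_j\circ x_j$; and $\sigma(\varphi)$ is the tuple $\big((\nicefrac{\_}{1})\circ\varphi\big)_i$.

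The key algebraic input, already formalized in \CubicalAgda~for \cite[Lemma 15]{ZeunerMortberg23} (see also \cite[p.\ 125]{SheavesInGeometryAndLogic}), is the equalizer property of localizations at a unimodular vector: the map $R\to\prod_i\locEl{R}{f_i}$, $r\mapsto(\nicefrac{r}{1})_i$, is injective, and every tuple $(r_i)_i$ whose components $r_i$ and $r_j$ have the same image in $\locEl{R}{f_if_j}$ under the canonical maps, for all $i,j$, is of the form $(\nicefrac{r}{1})_i$ for a (necessarily unique) $r:R$. Given a compatible family $(x_i)_i$, I instantiate the compatibility condition at $D=\locEl{R}{f_if_j}$ with $g_i,g_j$ the canonical localization morphisms --- these do equalize $\nicefrac{\_}{1}:R\to\locEl{R}{f_if_j}$ --- to conclude that, for each $a:A$, the tuple $(x_i(a))_i$ satisfies the hypothesis of the equalizer property, hence determines a unique $\varphi(a):R$ with $\nicefrac{\varphi(a)}{1} = x_i(a)$ in every $\locEl{R}{f_i}$. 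That $\varphi$ is a ring homomorphism follows from the uniqueness clause (equivalently, from injectivity of $R\to\prod_i\locEl{R}{f_i}$) together with the fact that each $x_i$ is a homomorphism: e.g.\ $\varphi(a+b)$ and $\varphi(a)+\varphi(b)$ both have image $x_i(a)+x_i(b)=x_i(a+b)$ in each $\locEl{R}{f_i}$, and likewise for multiplication and the constants. By construction $\sigma(\varphi)=(x_i)_i$, so $\sigma$ is surjective; and if $\sigma(\varphi)=\sigma(\varphi')$ then $\varphi(a)$ and $\varphi'(a)$ have the same image in every $\locEl{R}{f_i}$, so they agree by injectivity, whence $\sigma$ is injective. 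Thus $\sigma$ is an isomorphism, $\func{Sp}(A)$ is local, and --- $A$ being arbitrary --- the Zariski coverage is subcanonical.

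Granting the algebraic equalizer lemma, the only genuine subtlety is the bookkeeping around \cref{def: compFam} in the opposite category, so that is where I expect the main obstacle to lie. In particular the general compatibility condition (quantifying over all cocones $(D,g_i,g_j)$) is used only in its easy direction, by instantiation at the double localization $\locEl{R}{f_if_j}$; conversely the glued homomorphism $\varphi$ satisfies the general condition for free, since for any such cocone $g_i\circ x_i = g_i\circ(\nicefrac{\_}{1})\circ\varphi = g_j\circ(\nicefrac{\_}{1})\circ\varphi = g_j\circ x_j$. No deeper difficulty arises, as the real content --- gluing an element of $R$ from compatible local data --- is exactly what was established for the affine structure sheaf in \cite{ZeunerMortberg23}.
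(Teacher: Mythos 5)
Your proposal is correct and follows essentially the same route as the paper's proof: both reduce the sheaf condition for $\func{Sp}(A)$ to the already-formalized equalizer property of localizations at a unimodular vector (the map $R\to\prod_i\locEl{R}{f_i}$ being the equalizer of the two canonical maps into the double localizations $\locEl{R}{f_if_j}$), and both bridge the abstract compatible-family condition with that concrete equalizer condition. The paper packages this as a chain of isomorphisms factoring through $\sigma_{\func{Sp}(A)}$ rather than arguing element-wise, but the content, including the instantiation of compatibility at $\locEl{R}{f_if_j}$, is the same.
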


\begin{proof}
  Let $R:\func{CommRing}_\ell$ and $f_1,...,f_n:R$ a unimodular vector be given.
  For $i,j$ in $1,...,n$, we denote by $\chi^l_{ij}:\locEl{R}{f_i}\to\locEl{R}{f_if_j}$
  and $\chi^r_{ij}:\locEl{R}{f_j}\to\locEl{R}{f_if_j}$ the canonical morphisms
  given by the universal property of localization.
  We use without proof that the map
  \begin{align*}
    R ~\to~\tySigmaNoParen{x}{(i:\func{Fin}~n)\to\locEl{R}{f_i}}{~\forall~i~j\to\tyPath{\chi^l_{ij}(x_i)}{\chi^r_{ij}(x_j)}}
  \end{align*}
  sending $g:R$ to $\nicefrac{g}{1}:\locEl{R}{f_i}$ for $i=1,...,n$,
  is an isomorphism.
  Using this, one can construct a chain of isomorphisms
  \begin{align*}
    \mathsf{Hom}(A,R)
      ~&\cong~\tySigmaNoParen{\varphi}{(i:\func{Fin}~n)\to\mathsf{Hom}(A,\locEl{R}{f_i})}{~\forall~i~j\to\tyPath{\chi^l_{ij}\circ\varphi_i}{\chi^r_{ij}\circ\varphi_j}} \\
      ~&\cong~\func{CompatibleFam}^{\func{Sp}(A)}\big(\{\,f_i\}_{i=1,...,n}\big)
  \end{align*}
  which factors through the canonical map $\sigma_{\func{Sp}(A)}$.
\end{proof}

\section{Compact opens and qcqs-schemes}\label{sec: CompOpensAndSchemes}
The standard way to define open subfunctors follows a two step process.
First, one defines them for representables using (radical) ideals.
Then, one defines open subfunctors of general $\Z$-functors
by pulling back to representables.
Working predicatively in \CubicalAgda, we need to restrict ourselves
to finitely generated ideals, which gives us compact open subfunctors.
Let us sketch the idea behind compact opens informally to see
why this restriction is necessary:
For a f.g.\ ideal $I\subseteq A$,
we get the \emph{affine compact open subfunctor} $\mathsf{Sp}(A)_I\hookrightarrow \mathsf{Sp}(A)$ given by
\begin{align*}
  \mathsf{Sp}(A)_I(B) = \{ \varphi:\mathsf{Hom}(A,B) ~\vert~ \varphi^*I=B \} \subseteq \mathsf{Sp}(A)(B)
\end{align*}
If $I=\langle f_1,...,f_n \rangle$, then the ``pullback'' along  $\varphi:\mathsf{Hom}(A,B)$
is just $\varphi^*I=\langle \varphi(f_1),...,\varphi(f_n) \rangle$.
With this, we can define a subfunctor $U\hookrightarrow X$ to be \emph{compact open} if
pulling back along an $A$-valued point of $X$ gives an affine compact open subfunctor
of $\mathsf{Sp}(A)$, i.e.\ if for any ring $A$
and $\phi:{\mathsf{Sp}(A)}\Rightarrow {X}$ there is a f.g.\ ideal $I \subseteq A$ such that
the following is a pullback square
\[\begin{tikzcd}
	\mathsf{Sp}(A)_{I} & U \\
	\mathsf{Sp}(A) & X
	\arrow[hook, from=1-1, to=2-1]
	\arrow[hook, from=1-2, to=2-2]
	\arrow["\lrcorner"{anchor=center, pos=0.125}, draw=none, from=1-1, to=2-2]
	\arrow[from=1-1, to=1-2]
	\arrow["{\phi}", from=2-1, to=2-2]
\end{tikzcd}\]
Note that the ideal $I$ is not uniquely determined in this case.
Indeed, if $I=\langle f_1,...,f_n \rangle$ and $J=\langle g_1,...,g_m \rangle$
are such that ${\sqrt{I}}={\sqrt{J}}$, then
for any $\varphi:\mathsf{Hom}(A,B)$ we have
\begin{align*}
  1\in\langle \varphi(f_1),...,\varphi(f_n) \rangle \quad\text{iff}\quad
  1\in\langle \varphi(g_1),...,\varphi(g_m) \rangle
\end{align*}
and thus $\mathsf{Sp}(A)_I\cong\mathsf{Sp}(A)_J$.
In fact, one can prove that the converse also holds.
This means that the compact open $U$ and the $A$-valued point
$\phi:{\mathsf{Sp}(A)}\Rightarrow {X}$
determine a finitely generated ideal $I=\langle f_1,...,f_n \rangle$
up to equality of radical ideals,
i.e.\ an element  $D(f_1,...,f_n)$ of the Zariski lattice $\ZarLat{A}$.
Note that we can describe $\mathsf{Sp}(A)_I$ purely in terms of
$D(f_1,...,f_n)$, as the $B$-valued points are given by
\begin{align*}
  \mathsf{Sp}(A)_I(B) &= \{ \varphi:\mathsf{Hom}(A,B) ~\vert~ 1\in\langle \varphi(f_1),...,\varphi(f_n) \rangle \} \\
  &= \{ \varphi:\mathsf{Hom}(A,B) ~\vert~ D\big(\varphi(f_1),...,\varphi(f_n)\big)=D(1) \rangle \}
\end{align*}
The pullback condition ensures that this mapping is natural in $A$. In other words,
the compact open subfunctors of $X$ are in one-to-one correspondence
with natural transformations from $X$ to the $\Z$-functor $\mathcal{L}$
that sends a ring to its Zariski lattice. Note that we can define
this $\Z$-functor $\mathcal{L}$ because of the ``small'' definition of
Zariski lattice. If we drop the finiteness assumption on ideals to get open
subfunctors we cannot hope to define the classifier in \CubicalAgda.
We will discuss possibilities to do so in other systems in
\cref{subsec: NonConstr}.

For a topos theorist it might not constitute a particularly deep insight
that the compact open subfunctors (sub-objects) of a $\Z$-functor are \emph{classified}
by the ``internal Zariski lattice'' $\mathcal{L}$. This means that the compact opens
are precisely given by pullbacks of the form
\[\begin{tikzcd}
	U & \mathbf{1} \\
	X & \mathcal{L}
	\arrow[hook, from=1-1, to=2-1]
	\arrow["D(1)", from=1-2, to=2-2]
	\arrow["\lrcorner"{anchor=center, pos=0.125}, draw=none, from=1-1, to=2-2]
	\arrow[from=1-1, to=1-2]
	\arrow[from=2-1, to=2-2]
\end{tikzcd}\]
where $D(1):{\mathbf{1}}\Rightarrow {\mathcal{L}}$ is the ``constant'' natural transformation,
sending the point of the terminal $\Z$-functor $\mathbf{1}$ to the top
element of the Zariski lattice. From a formal perspective however,
we found it significantly more convenient to work with natural transformations
into $\mathcal{L}$ and the induced subfunctors, as opposed to following
the text-book strategy of defining compact-openness as a property of subfunctors
through the two step process outlined above.\footnote{A rare exception to following the
  standard definition is this blog-post \cite{MadoreDefScheme}.}
We will thus proceed to describe how compact opens can be formally defined as natural
transformations and how this gives a concise definition of qcqs-schemes.

\begin{definition}\label{def: ZarLatFun}
  Let $\mathcal{L}:\ZFunctor_\ell$ be the $\Z$-functor mapping a ring
  $R:\func{CommRing}_\ell$ to the underlying set of the Zariski lattice
  $\ZL$. The action on morphisms is induced by the universal property of the
  Zariski lattice, i.e.\ for $\varphi:\mathsf{Hom}(A,B)$ we take
  \[
  \begin{tikzcd}
    & A \arrow[dl,"D"']\arrow[dr,"D \circ\varphi"] & \\
    \mathcal{L}_A \arrow[rr,dashed, "\exists!~\ZarLat{\varphi}"'] && \mathcal{L}_B
  \end{tikzcd}
  \]
\end{definition}

\begin{definition}\label{def: CompOpens}
  Let $X:\ZFunctor_\ell$, a \emph{compact open} of $X$ is a natural transformation
  $U:\NatTrans{X}{\mathcal{L}}$. The \emph{realization} $\coBrackets{U}:\ZFunctor_\ell$
  of a compact open $U$ of $X$, is given by
  \begin{normalfont}
  \begin{align*}
    \coBrackets{U}\,(R)~=~\tySigmaNoParen{x}{X(R)}{\tyPath{U(x)}{D(1)}}
  \end{align*}
  \end{normalfont}
  A compact open $U$ is called \emph{affine}, if its realization is affine, i.e.\
  if there merely exists $R:\func{CommRing}_\ell$ such that
  $\coBrackets{U}\cong\func{Sp}(R)$.
\end{definition}
The reader may verify that for $U:\NatTrans{X}{\mathcal{L}}$,
$R:\func{CommRing}_\ell$ and $x:X(R)$ such that $U(x)=D(f_1,...,f_n)$, we have
\[\begin{tikzcd}
	\func{Sp}(R)_{\langle f_1,...,f_n \rangle} & \coBrackets{U} & {\mathbf{1}} \\
	\func{Sp}(R) & X & {\mathcal{L}}
	\arrow[hook, from=1-1, to=2-1]
	\arrow["{D(1)}", from=1-3, to=2-3]
	\arrow[hook, from=1-2, to=2-2]
	\arrow[from=1-2, to=1-3]
	\arrow["U", from=2-2, to=2-3]
	\arrow["\lrcorner"{anchor=center, pos=0.125}, draw=none, from=1-2, to=2-3]
	\arrow["\lrcorner"{anchor=center, pos=0.125}, draw=none, from=1-1, to=2-2]
	\arrow[from=1-1, to=1-2]
	\arrow["{\phi_x}", from=2-1, to=2-2]
\end{tikzcd}\]
where $\phi_x$  corresponds to the $R$-valued point $x$ by the Yoneda lemma.

Since $\mathcal{L}$ is a presheaf that takes values in distributive lattices
and its restriction maps are lattice morphisms, it is an \emph{internal} lattice
in the presheaf topos of $\Z$-functors.\footnote{It is even an internal lattice
  the big Zariski topos, i.e.\ in local $\Z$-functors. However for our purposes,
  we do not need that $\mathcal{L}$ is a Zariski sheaf.}
As such, it endows the compact opens with a distributive lattice structure.

\begin{definition}\label{def: compOpenDistLat}
  Let $X:\ZFunctor_\ell$, the \emph{lattice of compact opens}
  $\func{CompOpen}(X)$ is the type $\NatTrans{X}{\mathcal{L}}$
  equipped with the canonical point-wise operations,
  i.e.\ for $R:\func{CommRing}_\ell$ and $x:X(R)$, top, bottom, join and meet are given by
  \begin{align*}
    &\top_R(x) = D(1), \quad  \bot_R(x) = D(0) \\
    &(U\wedge V)_R(x) ~=~ U_R(x) \wedge V_R(x) \\
    &(U\vee V)_R(x) ~=~ U_R(x) \vee V_R(x)
  \end{align*}
  This defines a functor $\func{CompOpen}:\ZFunctor_\ell\to\func{DistLattice}_{\ell+1}^{op}$.
\end{definition}

\begin{definition}\label{def: qcqsScheme}
  $X:\ZFunctor_\ell$ is a \emph{qcqs-scheme} if it is a local $\Z$-functor and has
  an affine cover by compact opens. That is, there merely exist compact opens
  $U_1,...,U_n:\NatTrans{X}{\mathcal{L}}$ such that each $U_i$ is affine and
  $\tyPath{\top}{\bigvee_{i=1}^n U_i}$ in the lattice $\func{CompOpen}(X)$.
\end{definition}
As an immediate sanity check we get that affine schemes are qcqs-schemes:
\begin{proposition}\label{prop: affineIsQcqsScheme}
  $\func{Sp}(R)$ is a qcqs-scheme, for $R:\func{CommRing}_\ell$.
\end{proposition}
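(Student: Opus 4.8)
We need two things: that $\func{Sp}(R)$ is local, and that it admits an affine cover by compact opens. The plan is as follows.

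The first half is immediate: by Theorem \ref{thm: subcanonicalZariskiCoverage} the Zariski coverage is subcanonical, which says exactly that every representable $\func{Sp}(R)$ is a Zariski sheaf, hence a local $\Z$-functor in the sense of Definition \ref{def: ZariskiCoverage}.

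For the second half, the natural candidate cover is the trivial one-element cover: take $n = 1$ and $U_1 = \top : \NatTrans{\func{Sp}(R)}{\mathcal{L}}$, the top compact open from Definition \ref{def: compOpenDistLat}. Then $\tyPath{\top}{\bigvee_{i=1}^{1} U_i}$ holds trivially in $\func{CompOpen}(\func{Sp}(R))$, so the only real content is showing that $U_1 = \top$ is an affine compact open, i.e.\ that its realization $\coBrackets{\top}$ is (merely) isomorphic to some $\func{Sp}(S)$. Unwinding Definition \ref{def: CompOpens}, we have $\coBrackets{\top}(A) = \tySigmaNoParen{\varphi}{\mathsf{Hom}(R,A)}{\tyPath{D(1)}{D(1)}}$, and since $\tyPath{D(1)}{D(1)}$ is a contractible proposition, the first projection gives a natural isomorphism $\coBrackets{\top}\cong\func{Sp}(R)$. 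Hence $\top$ is an affine compact open witnessed by $R$ itself, and we are done.

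I expect no genuine obstacle here; the proposition is a sanity check. The only mild subtlety is the bookkeeping around the realization: one must check that the canonical map $\coBrackets{\top}(A)\to\func{Sp}(R)(A)$ really is natural in $A$ and that $D(1)=D(1)$ is a proposition (it is, since $\mathcal{L}_A$ is a set because the Zariski lattice has an underlying set), so that the $\Sigma$-type collapses. An alternative, if one wanted to avoid even this, is to observe that the pullback square displayed after Definition \ref{def: CompOpens} with $X = \func{Sp}(R)$, $x = \mathrm{id}_R$ and $U(x) = D(1)$ exhibits $\coBrackets{\top}$ as $\func{Sp}(R)_{\langle 1\rangle} = \func{Sp}(R)$ directly, again affine.
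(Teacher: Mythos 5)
Your proposal is correct and matches the paper's own argument: locality follows from Theorem~\ref{thm: subcanonicalZariskiCoverage}, and the trivial one-element cover by $\top$ is affine because $\coBrackets{\top}\cong\func{Sp}(R)$. The only cosmetic difference is that the paper identifies $\top$ with $D(1):\ZL$ via the Yoneda lemma, whereas you unwind the realization directly; both amount to the same observation.
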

\begin{proof}
  $\func{Sp}(R)$ is local by \cref{thm: subcanonicalZariskiCoverage}.
  The top element $\top:\func{CompOpen}(\func{Sp}(R))$
  is the ``constant'' natural transformation, sending everything to  $D(1)$, which
  by the Yoneda lemma corresponds to $D(1):\ZL$.
  It thus constitutes a trivial affine cover with $\coBrackets{\top}\cong\func{Sp}(R)$.
\end{proof}

\begin{remark}\label{rem: affineCover}
  Of course, a qcqs-scheme $X$ can and will have multiple different covers.
  \cref{def: qcqsScheme} suggests that
  ``having an affine cover'' should be expressed
  using nested mere existential quantification.
  In practice, it is more convenient to define
  the record-type \func{AffineCover}, of all affine covers of $X$,
  consisting of a finite list or vector of compact opens and proofs
  that these compact opens are affine and cover $X$.
  The property of having an affine cover is then defined as the
  truncation of this record type:
  \vspace{-\abovedisplayskip}
  \begin{center}
  \begin{minipage}[t]{1.0\linewidth}
  \begin{minipage}[t]{.48\linewidth}
    \ExecuteMetaData[agda/latex/ZFunctors.tex]{AffineCover}
  \end{minipage}
  \begin{minipage}[t]{.5\linewidth}
    \ExecuteMetaData[agda/latex/ZFunctors.tex]{hasAffineCover}
  \end{minipage}
  \end{minipage}
  \end{center}
  %
  If we want to prove a proposition about a qcqs-scheme $X$,
  we can assume we a have witness of type $\func{AffineCover}(X)$.
  If we want to map from $X$ into a set by using that $X$ has an affine
  cover, we have to show that the mapping is independent of the cover.\footnote{
    This holds by the general \emph{elimantion} principle of the propositional truncation
    due to Kraus \cite{KrausPropTrunc}.
    When mapping into types that are not sets, things get complicated very quickly.}
  This very much in line with informal mathematical practice.
\end{remark}

\begin{remark}\label{rem: inducedCover}
  One big advantage of using the internal lattice $\mathcal{L}$ to classify
  compact opens is that we get the notion of cover for free from the induced
  lattice operations in \cref{def: compOpenDistLat}.
  In textbooks, a cover by open subfunctors is usually defined directly
  using addition of ideals \cite{DemazureGabriel,NPNotes} or by taking the set-theoretic
  union at field-valued points \cite{EisenbudHarris}. The latter is not an option for
  our purposes, as the notion of field is not well-behaved constructively.
  In the \CubicalAgda~library, the join  $\_\vee\_:\ZarLat{A}\to\ZarLat{A}\to\ZarLat{A}$
  is also defined in terms of ideal addition,
  but we can upstream the necessary constructions
  and do not have to concern ourselves with pullbacks of $\Z$-functors.
\end{remark}

\section{Open subschemes}\label{sec: openSubschemes}
The benchmark for a workable formal definition of schemes
as locally ringed spaces, as in \cite{SchemesHOL,SchemesLean},
usually consists of a proof of the ``universal property'',
i.e.\ an adjunction $\Gamma\dashv\mathsf{Spec}$ where the counit is an isomorphism.
\cref{prop: OSpAdj}, the functorial analogue is rather straightforward
to prove. Instead, we give a proof that compact opens
of affine schemes are qcqs-schemes. We start by showing that compact opens of
Zariski sheaves are Zariski sheaves. Essentially, this holds because compact opens
are classified by $\mathcal{L}$, which is itself a Zariski sheaf. As it turns out, however,
it is sufficient to prove something weaker.

For the remainder of the paper we adopt
the following notation: For a ring $R$ and elements $f:R$ and $u:\ZL$, we
write $\rest{u}{\locEl{R}{f}}:\ZarLat{\locEl{R}{f}}$ for the result of applying
$\ZarLat{(\nicefrac{\_}{1})}$, the $\mathcal{L}$-action on the canonical
morphism. In particular we have
$\rest{D(g_1,...,g_m)}{\locEl{R}{f}}=D(\nicefrac{g_1}{1},...,\nicefrac{g_m}{1})$.

\begin{lemma}\label{lem: ZarLatSep}
  $\mathcal{L}$ is Zariski-separated, i.e.\ for $R:\func{CommRing}_\ell$ and
  $f_1 ,..., f_n:R$ unimodular the following holds:
  given $u,v:\ZL$, if $\tyPath{\rest{u}{\locEl{R}{f_i}}}{\rest{v}{\locEl{R}{f_i}}}$
  for all $i=1,...,n$, then $\tyPath{u}{v}$.
\end{lemma}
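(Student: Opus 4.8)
The plan is to prove the lemma point-free, relying only on the universal property of the Zariski lattice (together with the explicit description of localizations) and never mentioning points of $\Spec$. The key reduction is the following. Since $f_1,\dots,f_n$ is unimodular we have $\top = D(f_1,\dots,f_n) = \bigvee_{i=1}^n D(f_i)$ in $\ZL$, so by finite distributivity $u = \bigvee_{i=1}^n (u \wedge D(f_i))$ for every $u:\ZL$, and likewise for $v$. Hence it suffices to show $u \wedge D(f_i) = v \wedge D(f_i)$ for each $i$. For that it is in turn enough to produce, for each $i$, a function $\iota_i : \ZarLat{\locEl{R}{f_i}} \to \ZL$ satisfying $\iota_i(\rest{u}{\locEl{R}{f_i}}) = u \wedge D(f_i)$ for all $u:\ZL$; the hypothesis $\rest{u}{\locEl{R}{f_i}} = \rest{v}{\locEl{R}{f_i}}$ then immediately gives $u \wedge D(f_i) = v \wedge D(f_i)$.

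To construct $\iota_i$, fix $f = f_i$ and work in the principal downset ${\downarrow}D(f) = \{\,w:\ZL \mid w \leq D(f)\,\}$, which is a distributive lattice (closed under $\vee$, $\wedge$ and $\bot$ inside $\ZL$) with top element $D(f)$. The map $e : R \to {\downarrow}D(f)$, $g \mapsto D(g) \wedge D(f)$, is a support: it sends $0$ to $\bot$ and $1$ to $D(f)$, sends products to meets (by idempotence of $\wedge$), and satisfies $e(g+h) \leq e(g) \vee e(h)$ by distributivity. Using the explicit description of $\locEl{R}{f}$ as fractions $\nicefrac{r}{f^k}$, any support $d : R \to L$ into a distributive lattice with $d(f) = \top$ extends uniquely to a support $\locEl{R}{f} \to L$ along $\nicefrac{\_}{1}$, well-definedness on fractions being exactly where one uses that $d(f)$ is the top element. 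Applying this to $e$ and then the universal property of $\ZarLat{\locEl{R}{f}}$ yields a lattice homomorphism $\ZarLat{\locEl{R}{f}} \to {\downarrow}D(f)$; composing it with the inclusion ${\downarrow}D(f) \hookrightarrow \ZL$ gives $\iota_i$, which by construction satisfies $\iota_i(D(\nicefrac{g}{1})) = e(g) = D(g) \wedge D(f)$.

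It then remains to check $\iota_i(\rest{u}{\locEl{R}{f}}) = u \wedge D(f)$ for all $u:\ZL$. Viewed as functions of $u$, both sides preserve $\bot$ and $\vee$ (the left side because $\iota_i$ and $\rest{\_}{\locEl{R}{f}}$ do, the right side by distributivity), and every element of $\ZL$ is a finite join of basic opens $D(g)$, so it suffices to treat $u = D(g)$. There the right side is $D(g) \wedge D(f)$ by definition and the left side is $\iota_i(\rest{D(g)}{\locEl{R}{f}}) = \iota_i(D(\nicefrac{g}{1})) = D(g) \wedge D(f)$ by the previous paragraph. Chaining the reductions proves $u = v$.

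I expect the main obstacle to be the construction of $\iota_i$, specifically the fact that a support on $R$ taking $f$ to the top element extends to a support on the localization $\locEl{R}{f}$: this is the one place where one has to unfold localizations and argue with fractions, and one must take care to land the extension in the right lattice — namely ${\downarrow}D(f)$, whose top is $D(f)$, rather than $\ZL$ itself, since $\_ \wedge D(f)$ does not preserve the top element and so cannot be a lattice homomorphism into $\ZL$. A slicker repackaging, possibly closer to what the formalization does, is to observe that $\ZarLat{\locEl{R}{f}}$ and ${\downarrow}D(f)$ — equipped with the supports $D \circ \nicefrac{\_}{1}$ and $e$ respectively — are both \emph{the} universal support on $R$ sending $f$ to the top element, hence canonically isomorphic, the isomorphism identifying the restriction map $\rest{\_}{\locEl{R}{f}}$ with $\_ \wedge D(f)$.
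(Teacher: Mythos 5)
Your proposal is correct and follows essentially the same route as the paper's proof: reduce to $u \wedge D(f_i) = v \wedge D(f_i)$ via unimodularity and distributivity, build a lattice map $\ZarLat{\locEl{R}{f_i}} \to {\downarrow}D(f_i)$ out of the support $\nicefrac{r}{f_i^k} \mapsto D(r)\wedge D(f_i)$, and identify its composite with the restriction map as $\_\wedge D(f_i)$. The only (cosmetic) differences are that the paper defines that support directly on the localization rather than extending a support from $R$, and it gets the final identification from the uniqueness clause of the universal property of $\ZL$ rather than by re-checking on joins of basic opens.
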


\begin{proof}
  Let $R:\func{CommRing}_\ell$ and $f_1 ,..., f_n:R$ unimodular be given together with
  $u,v:\ZL$ satisfying $\tyPath{\rest{u}{\locEl{R}{f_i}}}{\rest{v}{\locEl{R}{f_i}}}$
  for all $i=1,...,n$. Recall that for $i=1,...,n$, the restriction
  $\rest{\_}{\locEl{R}{f_i}}:\ZL\to\ZarLat{\locEl{R}{f_i}}$
  is induced by the support $D(\nicefrac{\_}{1}):R\to\ZarLat{\locEl{R}{f_i}}$.
  Now let us fix an $i=1,...,n$. Much like in classical algebraic geometry,
  we can identify $\ZarLat{\locEl{R}{f_i}}$ with $\downarrow D(f_i)$,
  the lattice of elements of $\ZL$ smaller than $D(f_i)$.\footnote{Showing that
    $\mathsf{Spec}(\locEl{R}{f})$ is homeomorphic to $D(f)$ is a standard
    exercise in algebraic geometry.}
  The map $d:\locEl{R}{f_i}\to\,{\downarrow{D(f_i)}}$ given by
  $d(\nicefrac{r}{f_i^n})= D(r)\wedge D(f_i)$ defines a support and thus induces
  a morphism $\varphi:\ZarLat{\locEl{R}{f_i}}\to\,{\downarrow D(f_i)}$.

  Now, consider the map $\_\wedge D(f_i):\ZL\to\,{\downarrow D(f_i)}$.
  We claim that $\_\wedge D(f_i)$ factors through $\varphi$.
  By the universal property of $\ZL$, there is a unique $\psi:\ZL\to\,\downarrow D(f_i)$,
  such that $\tyPath{\psi\circ D}{d(\nicefrac{\_}{1})}$.
  Both $\_\wedge D(f_i)$ and $\varphi(\rest{\_}{\locEl{R}{f_i}})$
  satisfy the same commutativity condition as  $\psi$, which implies
  $\tyPath{\_\wedge D(f_i)}{\tyPath{\psi}{\varphi(\rest{\_}{\locEl{R}{f_i}})}}$.
  Pictorially, this amounts to observing that the following diagram commutes
  \[\begin{tikzcd}
          R && {\locEl{R}{f_i}} \\
          \ZL && {\ZarLat{\locEl{R}{f_i}}} && {\downarrow D(f_i)}
          \arrow["{d}", from=1-3, to=2-5]
          \arrow["{\rest{\_~}{\locEl{R}{f_i}}}", from=2-1, to=2-3]
          \arrow["{\varphi}", from=2-3, to=2-5]
          \arrow["{\_\wedge D(f_i)}"', curve={height=18pt}, from=2-1, to=2-5]
          \arrow["D"', from=1-3, to=2-3]
          \arrow["{\nicefrac{\_}{1}}", from=1-1, to=1-3]
          \arrow["D"', from=1-1, to=2-1]
  \end{tikzcd}\]
  From our assumption it thus follows that
  \begin{align*}
    \tyPath{u\wedge D(f_i)}
           {\tyPath{\varphi(\rest{u}{\locEl{R}{f_i}})}
             {\tyPath{\varphi(\rest{v}{\locEl{R}{f_i}})}{v\wedge D(f_i)}
             }
           }
  \end{align*}
  for all $i=1,...,n$. Since the $f_i$'s are unimodular,
  we get $\tyPath{D(1)}{\bigvee_iD(f_i)}$ and hence
  \begin{align*}
    \tyPath{u}
           {\tyPath{u\wedge D(1)}
             {\tyPath{\bigvee_{i=1}^n(u\wedge D(f_i))}
               {\tyPath{\bigvee_{i=1}^n(v\wedge D(f_i))}
                 {\tyPath{v\wedge D(1)}{v}
                 }
               }
             }
           }
  \end{align*}
\end{proof}

\begin{lemma}\label{lem: isLocalCompOpenOfLocal}
  If $X:\ZFunctor_\ell$ is local, then for any compact open
  $U:\NatTrans{X}{\mathcal{L}}$ its realization $\coBrackets{U}:\ZFunctor_\ell$
  is local.
\end{lemma}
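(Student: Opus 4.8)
The plan is to unwind \cref{def: isSheaf} for the presheaf $\coBrackets{U}$ on $\func{CommRing}_\ell^{op}$ and reduce its sheaf condition to that of $X$ together with the separatedness of $\mathcal{L}$ established in \cref{lem: ZarLatSep}. Fix $R:\func{CommRing}_\ell$ and a unimodular vector $f_1,\dots,f_n:R$, giving the Zariski cover $\{\,\nicefrac{\_}{1}:\locEl{R}{f_i}\to R\,\}_{i:\func{Fin}~n}$ as in \cref{def: ZariskiCoverage}. The first observation is that, since the underlying type of $\ZL$ is a set, the predicate $\tyPath{U(x)}{D(1)}$ appearing in \cref{def: CompOpens} is a proposition; hence the first projection $\coBrackets{U}(S)\to X(S)$ is an embedding for every ring $S$. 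Consequently a compatible family for $\coBrackets{U}$ over this cover is precisely the data of a compatible family $(x_i)_i$ for $X$ with $x_i:X(\locEl{R}{f_i})$, together with (automatically propositional) proofs $\tyPath{U(x_i)}{D(1)}$ in $\ZarLat{\locEl{R}{f_i}}$.

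To show the canonical map $\sigma$ for $\coBrackets{U}$ is surjective, take such a compatible family. Since $X$ is local, the sheaf property of $X$ yields a (unique) $x:X(R)$ with $\rest{x}{\locEl{R}{f_i}}=x_i$ for all $i$. Applying naturality of $U:\NatTrans{X}{\mathcal{L}}$ to the canonical morphisms $\nicefrac{\_}{1}$ and using that $\ZarLat{(\nicefrac{\_}{1})}$ is a lattice homomorphism (so preserves $D(1)$), we get
\begin{align*}
  \tyPath{\rest{U(x)}{\locEl{R}{f_i}}}{\tyPath{U\big(\rest{x}{\locEl{R}{f_i}}\big)}{\tyPath{U(x_i)}{\tyPath{D(1)}{\rest{D(1)}{\locEl{R}{f_i}}}}}}
\end{align*}
for every $i=1,\dots,n$. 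As the $f_i$ are unimodular, \cref{lem: ZarLatSep} applied with $u=U(x)$ and $v=D(1)$ gives $\tyPath{U(x)}{D(1)}$, so $x$ lifts to an element of $\coBrackets{U}(R)$ which restricts to the given family (the proof components match automatically, living in a proposition). For injectivity, if two elements of $\coBrackets{U}(R)$ have equal images under $\sigma$, their underlying elements of $X(R)$ have equal restrictions, hence agree by injectivity of $\sigma_X$, and then the proof components agree since $\tyPath{U(x)}{D(1)}$ is a proposition. Thus $\sigma$ for $\coBrackets{U}$ is a bijection of sets, i.e.\ an isomorphism, and $\coBrackets{U}$ is local.

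The argument is essentially bookkeeping: the only real content is the naturality computation showing that the ``characteristic map'' $U(x)$ has the same restrictions along the cover as the constant map $D(1)$, after which separatedness of $\mathcal{L}$ — the property, weaker than being a Zariski sheaf, isolated in \cref{lem: ZarLatSep} — does all the work. The point requiring the most care is simply that the sheaf condition is being invoked for the large-indexed presheaf $\coBrackets{U}$ on $\func{CommRing}_\ell^{op}$, and that the propositionality of $\tyPath{U(x)}{D(1)}$ is exactly what lets injectivity and the uniqueness clause be inherited verbatim from $X$.
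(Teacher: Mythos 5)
Your proposal is correct and follows essentially the same route as the paper's proof: transport the compatible family along the forgetful embedding $\coBrackets{U}(S)\hookrightarrow X(S)$, glue using the locality of $X$, and then verify $\tyPath{U(x)}{D(1)}$ by the naturality computation combined with \cref{lem: ZarLatSep}. The only cosmetic difference is that you phrase the conclusion as injectivity plus surjectivity of $\sigma$ while the paper constructs the inverse map explicitly, with the propositionality of $\tyPath{U(x)}{D(1)}$ doing the same bookkeeping in both cases.
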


\begin{proof}
  Let $R:\func{CommRing}_\ell$ and $f_1 ,..., f_n:R$ be unimodular.
  We need to construct an inverse to the map
  \begin{align*}
    \sigma_U : \tySigmaNoParen{x}{X(R)}{\tyPath{U(x)}{D(1)}} ~\to~ \func{CompatibleFam}^{\coBrackets{U}}\big(\{f_i\}_{i=1,...,n}\big)
  \end{align*}
  For $x:X(R)$ with $\tyPath{U(x)}{D(1)}$, $\sigma_U(x)$ is the family
  of elements $\rest{x}{\locEl{R}{f_i}}$.
  It is essentially the same map as the corresponding
  \begin{align*}
    \sigma_X:X(R) ~\to~\func{CompatibleFam}^X\big(\{f_i\}_{i=1,...,n}\big)
  \end{align*}
  but it keeps track of the fact that for each $i=1,...,n$ one has
  $\tyPath{U(\rest{x}{\locEl{R}{f_i}})}{D(1)}$

  Now, any compatible family of elements
  $x_i:X(\locEl{R}{f_i})$ with ${\tyPath{U(x_i)}{D(1)}}$ can be seen
  as a compatible family on $X$ by forgetting that $\tyPath{U(x_i)}{D(1)}$.
  To this family we apply the inverse map
  \begin{align*}
    \sigma_X^{-1}:\func{CompatibleFam}^X\big(\{f_i\}_{i=1,...,n}\big) ~\to~ X(R)
  \end{align*}
  that exists since $X$ was assumed local. We claim that we can
  set $\sigma_U^{-1}( \{x_i\}_{i=1,...,n})=\sigma_X^{-1}( \{x_i\}_{i=1,...,n})$
  as $\tyPath{U(\sigma_X^{-1}( \{x_i\}_{i=1,...,n}))}{D(1)}$.
  From this it also follows immediately that $\sigma_U$ and $\sigma_U^{-1}$ are
  mutually inverse.
  To prove the claim we use \cref{lem: ZarLatSep} and the fact that
  for each $i=1,...,n$:
  \begin{align*}
     \rest{U(\sigma_X^{-1}( \{x_i\}_{i=1,...,n}))}{\locEl{R}{f_i}} ~&\func{≡}~
     U(\rest{\sigma_X^{-1}( \{x_i\}_{i=1,...,n})}{\locEl{R}{f_i}}) \\
     ~&\func{≡}~U(\sigma_X(\sigma_X^{-1}( \{x_i\}_{i=1,...,n}))_i)
     ~~\func{≡}~U(x_i)
     ~\func{≡}~D(1)
  \end{align*}
\end{proof}
It remains to prove that compact opens of affine schemes (merely) have an affine cover.
Before treating arbitrary compact opens,
we introduce the standard or basic opens of a representable $\Z$-functor
with fair bit of abuse of notation.
\begin{definition}\label{def: standardOpen}
  Let $R:\func{CommRing}_\ell$ and $f:R$, the standard open
  $D(f):\NatTrans{\func{Sp}(R)}{\mathcal{L}}$ is given by applying the Yoneda lemma
  to the basic open $D(f):\ZL$.
\end{definition}

\begin{proposition}\label{prop: isAffineStandardOpen}
  For $R:\func{CommRing}_\ell$ and $f:R$, the standard open $D(f)$ is affine.
  In particular one has a natural isomorphism
  ${\coBrackets{D(f)}}\cong{\func{Sp}\big(\locEl{R}{f}\big)}$.
\end{proposition}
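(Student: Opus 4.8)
The plan is to construct the asserted natural isomorphism $\coBrackets{D(f)}\cong\func{Sp}\big(\locEl{R}{f}\big)$ directly; affineness of $D(f)$ is then immediate, taking $\locEl{R}{f}$ as the (merely existing) witnessing ring. First I would unfold the realization pointwise. For $B:\func{CommRing}_\ell$, the standard open of \cref{def: standardOpen} acts by $D(f)_B(\varphi) = \ZarLat{\varphi}\big(D(f)\big) = D\big(\varphi(f)\big)$, using the defining property $\ZarLat{\varphi}\circ D = D\circ\varphi$ from \cref{def: ZarLatFun}. Hence
\begin{align*}
  \coBrackets{D(f)}(B) ~=~ \tySigmaNoParen{\varphi}{\mathsf{Hom}(R,B)}{\tyPath{D(\varphi(f))}{D(1)}}.
\end{align*}
By the characterisation of the partial order on the Zariski lattice recorded in \cref{subsec: ZariskiLattice} (the case $g=1$), the condition $\tyPath{D(\varphi(f))}{D(1)}$ is equivalent to $\varphi(f)\in B^\times$. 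So $\coBrackets{D(f)}(B)$ is, up to this equivalence of propositions, the subtype of $\mathsf{Hom}(R,B)$ of homomorphisms sending $f$ to a unit.

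The universal property of the localization $\locEl{R}{f}$ --- it is the initial $R$-algebra in which $f$ becomes invertible --- now supplies an equivalence between this subtype and $\mathsf{Hom}\big(\locEl{R}{f},B\big) = \func{Sp}\big(\locEl{R}{f}\big)(B)$. The forward map sends $\psi:\mathsf{Hom}\big(\locEl{R}{f},B\big)$ to $\psi\circ(\nicefrac{\_}{1}):\mathsf{Hom}(R,B)$, which sends $f$ to $\psi(\nicefrac{f}{1})\in B^\times$ since ring homomorphisms preserve units and $\nicefrac{f}{1}$ is a unit in $\locEl{R}{f}$. The backward map sends a $\varphi$ with $\varphi(f)\in B^\times$ to the unique factoring morphism $\locEl{R}{f}\to B$ provided by the universal property. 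That these two maps are mutually inverse is exactly the uniqueness clause of that universal property.

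Finally I would verify naturality in $B$, i.e.\ that the above equivalences commute with postcomposition along an arbitrary ring homomorphism $B\to B'$. For the $\func{Sp}\big(\locEl{R}{f}\big)$ side this is just functoriality; for the $\coBrackets{D(f)}$ side it follows because the unique factoring morphism of the universal property is stable under postcomposition, and because the restriction maps of $\mathcal{L}$ are induced functorially (\cref{def: ZarLatFun}), so the displayed reformulation of $\coBrackets{D(f)}(B)$ is itself natural. Assembling these, we obtain an isomorphism in $\ZFunctor_\ell$ between $\coBrackets{D(f)}$ and $\func{Sp}\big(\locEl{R}{f}\big)$, which proves the proposition. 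I expect the only mildly fiddly part to be this last bookkeeping --- lining up the naturality square of the localization universal property with the $\mathcal{L}$-action on the canonical morphisms --- but it presents no genuine mathematical obstacle; the whole argument is a straightforward chaining of the universal properties of localization and of the Zariski lattice already established above.
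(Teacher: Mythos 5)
Your argument is correct and matches the paper's proof exactly: the same pointwise chain of isomorphisms $\coBrackets{D(f)}(B)\cong\tySigmaNoParen{\varphi}{\mathsf{Hom}(R,B)}{\varphi(f)\in B^\times}\cong\mathsf{Hom}\big(\locEl{R}{f},B\big)$ via the order characterisation on $\ZL$ and the universal property of localization. The paper simply omits the naturality check that you sketch at the end.
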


\begin{proof}
  The universal properties of localization and Zariski lattice give us
  for $A$-valued points
  \begin{align*}
    \func{Sp}\big(\locEl{R}{f}\big)(A)
    &= \mathsf{Hom}(\locEl{R}{f},A) \\
    &\cong \tySigmaNoParen{\varphi}{\mathsf{Hom}(R,A)}{\varphi(f)\in A^\times} \\
    &\cong \tySigmaNoParen{\varphi}{\mathsf{Hom}(R,A)}{\tyPath{D(\varphi(f))}{D(1)}} \\
    &= \coBrackets{D(f)}(A)
  \end{align*}
  We omit the proof that this is natural in $A$.
\end{proof}

\begin{theorem}
  The realization $\coBrackets{U}$ of a compact open
  $U:\NatTrans{\func{Sp}(R)}{\mathcal{L}}$ is a qcqs-scheme.
\end{theorem}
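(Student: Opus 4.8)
The plan is to verify the two conditions of \cref{def: qcqsScheme} for $\coBrackets{U}$ directly, with $R:\func{CommRing}_\ell$ and $U:\NatTrans{\func{Sp}(R)}{\mathcal{L}}$ given. Locality is immediate: by \cref{thm: subcanonicalZariskiCoverage} the affine scheme $\func{Sp}(R)$ is local, so \cref{lem: isLocalCompOpenOfLocal} applied to the compact open $U$ yields that $\coBrackets{U}$ is local. The real content is producing an affine cover by compact opens. Since ``having an affine cover'' is a proposition, I may first pass to a concrete representative: by the Yoneda lemma $U$ is classified by an element $u:\mathcal{L}(R)=\ZL$, and the quotient map onto $\ZL$ is surjective, so I can merely choose $f_1,\dots,f_n:R$ with $\tyPath{u}{D(f_1,\dots,f_n)}$.

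With such generators fixed, I would write $\iota:\coBrackets{U}\hookrightarrow\func{Sp}(R)$ for the canonical inclusion (on $A$-valued points the first projection $\tySigmaNoParen{x}{\func{Sp}(R)(A)}{\tyPath{U(x)}{D(1)}}\to\func{Sp}(R)(A)$) and, for $i:\func{Fin}~n$, let $D(f_i):\NatTrans{\func{Sp}(R)}{\mathcal{L}}$ be the standard open of \cref{def: standardOpen}. I propose to take as cover the compact opens $U_i := D(f_i)\circ\iota:\NatTrans{\coBrackets{U}}{\mathcal{L}}$. That these cover $\coBrackets{U}$ should follow from $\tyPath{D(f_1,\dots,f_n)}{\bigvee_{i=1}^n D(f_i)}$ in $\ZL$: under the Yoneda isomorphism the pointwise lattice structure on $\func{CompOpen}(\func{Sp}(R))$ agrees with that of $\ZL$, so $\tyPath{U}{\bigvee_{i=1}^n D(f_i)}$ holds there as well; precomposition with $\iota$ is a lattice homomorphism $\func{CompOpen}(\func{Sp}(R))\to\func{CompOpen}(\coBrackets{U})$, hence $\tyPath{\bigvee_{i=1}^n U_i}{U\circ\iota}$; and $U\circ\iota$ sends every $(x,p):\coBrackets{U}(A)$ to $U(x)=D(1)=\top$ by the very definition of $\coBrackets{U}$. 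Thus $\tyPath{\top}{\bigvee_{i=1}^n U_i}$ in $\func{CompOpen}(\coBrackets{U})$.

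For affineness of each $U_i$, I would unfold $\coBrackets{U_i}(A)$ to triples $((x,p),q)$ with $x:\func{Sp}(R)(A)$, $p:\tyPath{U(x)}{D(1)}$ and $q:\tyPath{D(f_i)(x)}{D(1)}$. Since $D(f_i)$ is one of the joinands of $\tyPath{U}{\bigvee_j D(f_j)}$ we have $D(f_i)\leq U$, hence $D(f_i)(x)\leq U(x)$ pointwise, so the hypothesis $q$ already forces $\tyPath{U(x)}{D(1)}$ and the proof $p$ carries no information. This should give a natural isomorphism $\coBrackets{U_i}\cong\coBrackets{D(f_i)}$, which by \cref{prop: isAffineStandardOpen} is $\func{Sp}(\locEl{R}{f_i})$; thus each $U_i$ is affine and $U_1,\dots,U_n$ is the desired affine cover. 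The main obstacle I anticipate is not any isolated deep step but the accumulation of naturality and coherence bookkeeping --- that $\iota$ and each $U_i$ really are natural, that precomposition with $\iota$ respects the lattice operations, that the pointwise and Yoneda lattice structures on $\func{CompOpen}(\func{Sp}(R))$ coincide, and that $\coBrackets{U_i}\cong\coBrackets{D(f_i)}$ is natural in $A$ --- all while correctly threading the truncation elimination used to extract the generators $f_i$ in the first place.
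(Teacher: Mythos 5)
Your proposal is correct and follows essentially the same route as the paper: locality via \cref{thm: subcanonicalZariskiCoverage} and \cref{lem: isLocalCompOpenOfLocal}, then Yoneda to get $u:\ZL$, the merely-existing decomposition $\tyPath{u}{\bigvee_i D(f_i)}$, the lattice isomorphism $\ZL\cong\func{CompOpen}(\func{Sp}(R))$, and the observation that $D(f_i)\leq U$ lets one regard the $D(f_i)$ as affine compact opens covering the top of $\func{CompOpen}\big(\coBrackets{U}\big)$. You have merely spelled out more of the bookkeeping (the restriction along $\iota$, the computation $\bigvee_i U_i = U\circ\iota=\top$, and the identification $\coBrackets{U_i}\cong\coBrackets{D(f_i)}$) that the paper leaves implicit.
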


\begin{proof}
  We get that $\coBrackets{U}$ is local from \cref{lem: isLocalCompOpenOfLocal} and
  \cref{thm: subcanonicalZariskiCoverage},
  the subcanonicity of the Zariski coverage. It remains to show that
  $\coBrackets{U}$ (merely) has an affine cover. By the Yoneda lemma, the compact open $U$ corresponds
  to and element $u:\ZL$. Every element of $\ZL$ can (merely) be expressed as a join
  of basic opens, i.e.\ we can assume $\tyPath{u}{\bigvee_i D(f_i)}$ for some
  $f_1,...,f_n:R$. Since the Yoneda lemma actually gives us an isomorphism of lattices
  between $\ZL$ and $\func{CompOpen}(\func{Sp}(R))$, we get a cover of compact opens
  $\tyPath{U}{\bigvee_i D(f_i)}$ which is affine by \cref{prop: isAffineStandardOpen}.
  Note that this is an equality in the lattice $\func{CompactOpen}(X)$.
  But since $D(f_i)\leq U$ in $\func{CompactOpen}(X)$ for $i=1,...,n$,
  we may regard the $D(f_i)$ as affine compact opens of $\coBrackets{U}$
  covering  of the top element of $\func{CompactOpen}\big(\coBrackets{U}\big)$.
\end{proof}

\section{Conclusion}\label{sec: Conclusion}
In this paper we presented a formalization of qcqs-schemes
as a full subcategory of the category of $\Z$-functors.
We defined the Zariski coverage on $\func{CommRing}_\ell^{op}$ and proved it subcanonical.
This let us define locality of $\Z$-functors and conclude that affine schemes,
i.e.\ representable $\Z$-functors, are local.
When formalizing the notion of an open covering, we introduced compact open subfunctors.
We argued that compact opens can conveniently be classified by
the $\Z$-functor that maps a ring to its Zariski lattice.
We leveraged this fact to automatically obtain a notion of covering by compact opens
and thus a formal definition of qcqs-schemes. Finally, we gave a fully constructive proof that
compact opens of affine schemes are qcqs-schemes using only point-free methods.

As mentioned before, our formalization should be regarded as a univalent
rather than a cubical formalization.
We do not depend on cubical features of \CubicalAgda~such as the interval.
However, we are adopting the univalent approach of distinguishing propositions,
sets etc.\ internally and we do require the propositional truncation and the set-quotient HITs.
Univalence is only used in the guise of its useful consequences like function extensionality.

\subsection{Going classical}\label{subsec: NonConstr}
\CubicalAgda's type theory is fully constructive and predicative.
Using set-quotients, we can define the Zariski lattice over a ring
living in the same universe as the base ring, as shown in \cite{ZeunerMortberg23}.
This predicative definition is essential for defining the classifier
$\mathcal{L}:\ZFunctor_\ell$ of compact opens and thus plays a key role in
our definition of functorial qcqs-schemes.
This makes our approach easily extensible with the use of additional logical assumptions.
If one would want to formalize not only qcqs- but general schemes
using the functor of points approach, this should be directly possible
by using a classifier for opens, not only compact opens, instead.

Assuming impredicativity, e.g. in the form
of Voevodsky's resizing axioms \cite{VoevodskyResizing},
one could define the classifier for open subfunctors as the
$\Z$-functor sending a ring $R$ to the \emph{frame} of radical ideals of $R$.\footnote{Impredicativity
  is needed to ensure that the type of ideals of a ring $R$
  lives in the same universe as $R$.}
Alternatively, assuming classical logic, one could use the frame
of Zariski-open subsets of $\mathsf{Spec}(R)$ as the classifier. This
also induces a notion of cover (not necessarily finite this time) and
hence a notion of general functorial schemes.  We expect that in this
situation one can closely follow the approach of \cref{sec: openSubschemes}
to get a corresponding proof that open subfunctors
of affine schemes are schemes. The only difference being
perhaps the proof of \cref{lem: ZarLatSep}
that the classifier is separated wrt.\ the Zariski coverage.

We decided to stick to qcqs-schemes not only because crucial
tools like the Zariski lattice were already available in
the \agdaCubical~library. We hope that the paper contains valuable
insights for constructive mathematicians interested in the foundations
of algebraic geometry, while still being usable as a blue-print for
formalizing the functor of points approach in other (possibly classical)
proof assistants.

\subsection{Synthetic algebraic geometry}\label{subsec: SAG}
The functor of points approach allows one to develop algebraic geometry synthetically.
Here, the word synthetic means ``working in the internal language of a suitable topos''.
In our case this topos is the big Zariski topos, i.e.\ the sheaf topos of local $\Z$-functors.
From the internal point of view, Zariski sheaves look like simple sets, which
can make reasoning about them easier.
The PhD thesis of Blechschmidt \cite{BlechschmidtPhD} contains an excellent introduction
to synthetic algebraic geometry for interested readers familiar with classical algebraic geometry.

This approach can even be axiomatized. Recently,
Cherubini, Coquand and Hutzler \cite{SAG} have combined
the axiomatic approach to synthetic algebraic geometry with HoTT/UF.
By adding the axioms of synthetic algebraic geometry to a dependent type theory
with univalence and HITs one can even study the cohomology of schemes
synthetically. They give a model construction in a ``higher'' Zariski topos,
where they restrict themselves to functors from \emph{finitely presented algebras} to sets
in order to avoid size issues.
Finitely presented algebras over a ring $R$
are of the form
$\nicefrac{R[x_1,...,x_n]}{\langle p_1,...,p_m\rangle}$.
For a fixed $R$, the category of f.p.\ $R$-algebras is small and one can thus use it to
develop functorial algebraic geometry without
having to worry about universe levels. Repeating the steps outlined  in this paper for f.p.\ algebras
should give rise to a truly predicative formalization of
schemes of finite presentation over $R$.


\subsection{A constructive comparison theorem}\label{subsec: ConstrCompThm}
For the working algebraic geometer, using the functor of points approach can
sometimes be advantageous, but ultimately one wants to be able to switch seamlessly
between schemes as functors from rings to sets and schemes as locally ringed spaces.
This is made possible by the so-called \emph{comparison theorem}
\cite[p. 23]{DemazureGabriel}, which establishes an equivalence of categories
between functorial and ``geometrical'' schemes. It is proved by
constructing an adjunction between $\Z$-functors
and locally ringed spaces such that unit and counit of the adjunction become
natural isomorphisms when restricted to the respective full subcategories of schemes.

Coquand, Lombardi and Schuster \cite{ConstrSchemes} give a point-free reconstruction
of geometric qcqs-schemes that is suitable for constructive study.
Instead of using locally ringed spaces, their ``spectral schemes'' are given
as distributive lattices with a sheaf of rings. The affine scheme
associated to a ring $R$ is just the Zariski lattice $\ZL$ equipped
with the usual structure sheaf. Classically, these
spectral schemes are equivalent to convential qcqs-schemes because the
topology of a qcqs-scheme is \emph{coherent} or \emph{spectral}.\footnote{
  Stone's representation theorem for distributive lattices \cite{StoneRepresentation},
  tells us that all topological information of a coherent space
  is encoded in its lattice of compact open subsets.}

Our definition of qcqs-scheme can be seen as the functorial counterpart to the
lattice-based definition of spectral scheme
due to Coquand, Lombardi and Schuster. One would hope that these
turn out to be equivalent by a \emph{constructive} comparison theorem à la Demazure and Gabriel.
Proving such a theorem requires a decent amount of novel constructive mathematics to
be developed first. One needs to introduce a point-free notion of \emph{locally}
ringed distributive lattices that contain spectral schemes
as a full subcategory and construct a suitable adjunction with $\Z$-functors.
Due to size issues, this would only be a relative adjunction as in \cref{rem: relAdj}.
This could be particularly interesting problem in a univalent setting.

\bibliography{refs}
\end{document}